\newtheorem{maintheorem}{Theorem}
\newtheorem{theorem}{Theorem}[section] 
\newtheorem{lemma}[theorem]{Lemma}     
\newtheorem{corollary}[theorem]{Corollary}
\newtheorem{proposition}[theorem]{Proposition}
\newtheorem{definition}[theorem]{Definition}
\newtheorem{remark}[theorem]{Remark}
\newtheorem{example}[theorem]{Example}
\newcommand{\N} {\mathbb N}
\newcommand{\figref}[1]{Figure~\ref{#1}}
\newtheorem*{problem1}{Monge Transport Problem}
\newtheorem*{problem2}{Monge-Kantorovich Transport Problem}
\DeclareMathOperator{\vol}{vol}
\begin{document}

	\title[Geometric properties of disintegration of measures]
		 {Geometric properties of disintegration of measures}

	\author[R. Possobon]{Renata Possobon}
	\address{Renata Possobon\\
		Institute of Mathematics, Department of Applied Mathematics\\
		Universidade Estadual de Campinas\\
 	 	13.083-859 Campinas - SP\\
 	 	Brazil}
   \email{re.possobon@gmail.com}

	\author[C.S. Rodrigues]{Christian S.~Rodrigues}
	\address{Christian S.~Rodrigues\\
		Institute of Mathematics, Department of Applied Mathematics\\
		Universidade Estadual de Campinas\\
		13.083-859 Campinas - SP\\
   		Brazil\\
		}
  	\email{rodrigues@ime.unicamp.br}

	\date{\today}

	\begin{abstract} 
		\nohyphens{In this paper, we study a connection between disintegration of measures and geometric properties of probability spaces. We prove a disintegration theorem, addressing disintegration from the perspective of an optimal transport problem. We look at the disintegration of transport plans, which are used to define and study disintegration maps. Using these objects, we study the regularity and absolute continuity of disintegration of measures. In particular, we exhibit conditions for which the disintegration map is weakly continuous and one can obtain a path of measures given by this map. We show a rigidity condition for the disintegration of measures to be given into absolutely continuous measures.}
	\end{abstract}

	\keywords{Disintegration Theorem, Probability, Optimal Transport, Ergodic Theory, Dynamical Systems}

	\subjclass{37H10, 37C05 (secondary), 37C40, 49K45, 49N60 (primary)}

	\maketitle
	

	\section{Introduction}

		The disintegration of a measure over a partition of the space on which it is defined is a way to rewrite this measure as a combination of probability measures, which are concentrated on the elements of the partition. As an example, consider the probability space $(X, \mathcal{F}, \mu)$ and its partition into a finite number of measurable subsets $P_{1}, \dots, P_{n}$ with positive measure. A disintegration of $\mu$ with respect to (w.r.t.) this partition is a family of probabilities $\{\mu_1, \dots \mu_n \}$ on $X$, such that, for $i=1, \dots, n$, we have $\mu_{i} (P_i)=1$ and, for every measurable set $E \subset X$, the conditional measures are given by $\mu_i (E) = {\mu(E \cap P_i)}\slash{\mu(P_i)}$. It is possible to write the original measure as a combination of the conditional ones
		\begin{displaymath}
			\mu(E) = \sum_{i=1}^{n} \mu (P_{i}) \mu_{i}(E) = \sum_{i=1}^{n} \mu (P_{i}) \frac{\mu(E \cap P_i)}{\mu(P_i)}.
		\end{displaymath}
	
		More generally, consider a probability space $(X, \mathcal{F}, \mu)$ and a partition $\mathcal{P}$ of $X$ into measurable subsets. Let $\varrho$ be the  natural projection that associates each point $x \in X$ to the element $P \in \mathcal{P}$ which contains $x$. The measurable function $\varrho$ can be used to induce a probability $\hat{\mu}$ on $\mathcal{P}$. A subset $B \subset \mathcal{P}$ is measurable if, and only if, $\varrho^{-1}(B)$ is a measurable subset of $X$. Then, the family $\hat{\mathcal{B}}(\mathcal{P})$ of measurable subsets is a $\sigma$-algebra on $\mathcal{P}$. Let $\hat{\mu}$ denote the measure given by
		\begin{displaymath}
			\hat{\mu}(B) = \varrho_{*}\mu (B) := \mu \circ \varrho^{-1} (B) = \mu(\{x \in X : \varrho(x) \in B\})
		\end{displaymath}
		for every $B \in \hat{\mathcal{B}}(\mathcal{P})$. In this case, $\hat{\mu}$ is called the \textbf{law} of $\varrho$, denoted by $\operatorname{law}(\varrho)$. A disintegration of $\mu$ with respect to $\mathcal{P}$ into conditional measures is a family $\{{\mu}_{P}:P\in \mathcal{P} \}$ of probability measures on $X$, such that, for every $E \in \mathcal{F}$,
		\begin{enumerate}
			\item $\mu_{P}(P)=1$ for $\hat{\mu}$-almost every $P \in \mathcal P$;
			\item $P \mapsto \mu_{P}(E)$ is measurable;
			\item $\mu(E)=\int \mu_{P}(E) ~d \hat{\mu}(P)$.
		\end{enumerate}
		
		There are several reasons why one may wish to study such possible combinations of measures. In Ergodic Theory, for example, the disintegration of a measure is directly related to the ergodic decomposition of invariant measures, which are crucial objects encoding the asymptotic behaviour of dynamical systems \cite{OV14}. The concept of disintegration, however, appears in much broader context in areas, such as Probability \cite{CP97, Par67}, Geometry \cite{Stu06}, among others \cite{BM17, GL20, Var16, Vil03}.
	
		The idea of disintegrating a measure was devised by Von Neumann in 1932 \cite{Von32}. Since then, different versions of disintegration theorems have been presented and used, for example in~\cite{AGS05, AP03, DM78, Tue75}, to name a few. In particular, the well-known Rokhlin's Disintegration Theorem shows that there exists a disintegration of $\mu$ relative to $\mathcal{P}$ if $X$ is a complete separable metric space and $\mathcal{P}$ is a measurable partition. By measurable partition we mean that there exists some measurable set $X_{0} \subset X$, such that $\mu(X)=\mu(X_{0})$, and $\mathcal{P} = \bigvee_{n=1}^{\infty} \mathcal P_{n} = \{ P_1 \cap P_2 \cap \dots : P_n \in \mathcal{P}_n, ~\text{for all}~ n \geq 1 \}$ restricted to $X_{0}$, for an increasing sequence $\mathcal P_{1} \prec \mathcal P_{2} \prec \dots \prec \mathcal P_{n} \prec \dots$ of countable partitions \cite{Rok52}.
	
		Recently, Simmons has proposed a more general and subtle formulation of Rokhlin's Disintegration Theorem, where he has considered any universally measurable space $(X, \mathcal{B}, \mu)$ and  a measure space $Y$, for which there exists an injective map $Y \to \{0, 1\}^{\N}$. That is, $Y$ is any subspace of the standard Borel space. He has shown that there is a (unique) system of conditional measures $(\mu_{y})_{y \in Y}$, a disintegration of $\mu$~\cite{Sim12}. Then, his formulation is further developed to address $\sigma$-finite measure spaces with absolutely continuous morphisms. One of the facts standing out in Simmons' formulation is a fibre-wise perspective, which we wish to further explore.

		Even though, geometric properties of the space where a measure is defined and statistical properties obtained via disintegration theorems seem to be strictly connected, for instance in foliated manifolds, very little geometric information is taken into account while studying disintegration of measures. In particular, intrinsic geometric properties of probability spaces are very often neglected. The purpose of this paper is to advertise the viewpoint of tackling disintegration of measures taking into consideration intrinsic structures of probability spaces obtained via Optimal Transport Theory. To this end, we will formulate disintegration of probability measures in terms of a transportation problem to explore a fibre-wise formulation of a disintegration and its consequences.

		\subsection*{Main results}
		
			As a first result in this paper, we proved a disintegration theorem, Theorem~\ref{conditional}, and we introduced a fibre-wise perspective on disintegration. Using this disintegration theorem, we studied conditions in which one can obtain a path of conditional measures in the space of probability measures. In particular, in the propositions \ref{weakly_continuous}, \ref{bijective} and \ref{mmf}, we investigated the weak continuity of the disintegration map, which parametrises the conditional measures. The last result in this paper is Theorem \ref{thm.absolute}. Its first part shows how to construct a path of conditional measures in the space of probabilities. Its second part gives us a sort of rigidity result for disintegration of measures. Namely, we showed that if one of the measures in this path is absolutely continuous, then all measures in the associated path must also be absolutely continuous. The third part is a particular case of absolutely continuity in which the disintegration map is an isometry.
			
			The paper is organised as follows. Section \ref{sec.settings} contains the main concepts from Optimal Transport Theory to be used along the paper. In Section~\ref{sec.disintegra}, a disintegration theorem, stated as Theorem~\ref{conditional}, is proved. In Section~\ref{sec.disint-maps}, Theorem~\ref{conditional} is used to define and study what we called disintegration maps. These crucial objects are used in Section~\ref{sec.absolute}. There we proved a series of propositions about a disintegration map: in Proposition \ref{weakly_continuous} we show that this map is nearly weakly continuous, under some assumptions on the reference measure $\nu$. In propositions \ref{bijective} and \ref{mmf} we studied hypotheses about the disintegration for which this map is weakly continuous. Afterwards we proved our main result in this section, Theorem \ref{thm.absolute}, about paths of measures given via disintegration maps and a rigidity condition establishing absolute continuity of measures in these paths.
		

\
		
	\section{Spaces of probability measures, Wasserstein spaces and Optimal Transport}\label{sec.settings}
	
		In this section, we set up some notation to be used throughout the text. We also introduce some basic terminology from Optimal Transport Theory, which is meant for readers not familiar with this area. Those who are skilled on the topic may wish to skip this section. Our main references for this section are~\cite{AGS05, Amb00, Vil09}.
	
		The cornerstone for the theory of optimal transportation is considered to be a logistic problem addressed by Gaspar Monge in 1781. The main idea is to transport masses from a given location to another one at minimal cost. To state it in a modern formulation, let $\mathscr{P}(X)$ be the set of all Borel probability measures on $X$. The problem amounts to the following.
	
		\
	
 		\begin{problem1}\label{Monge-problem}
			Let $X$, $Y$ be Radon spaces. Given measures $\mu \in \mathscr{P}(X)$, $\nu \in \mathscr{P}(Y)$, and a fixed Borel cost function $c: X \times Y \to [0, \infty]$, minimize
			\begin{displaymath}
				T \mapsto \int_{X} c(x, T(x)) ~d\mu
			\end{displaymath}
			among all maps $T$, such that, $T_{*}\mu = \nu$.  		
		\end{problem1}

		\

		\noindent The maps $T$ fulfilling  $T_{*}\mu = \nu$ are called \textbf{transport maps}. The Monge Transport Problem actually may be ill-posed and such a map does not need to exist. That is the case, for example, when one of the measures is a Dirac mass and the other one is not. A way around is given by a different formulation as proposed by Kantorovich.
	
		\
	
		\begin{problem2}
			Let $X$, $Y$ be Radon spaces. Given measures $\mu \in \mathscr{P}(X)$, $\nu \in \mathscr{P}(Y)$, and a fixed Borel cost function $c: X \times Y \to [0, \infty]$, minimize
			\begin{equation} \label{Kantorovich}
				\gamma \mapsto	\int_{X \times Y} c(x, y) ~d\gamma(x, y)
			\end{equation}
			among all measures $\gamma \in \mathscr{P}(X \times Y)$  with \textbf{marginals} $\mu$ and $\nu$, i. e., $\gamma$ satisfying $(\text{proj}_{X})_{*}\gamma=\mu$ and $(\text{proj}_{Y})_{*}\gamma=\nu$, where $\text{proj}_{X}$ and $\text{proj}_{Y}$ are the canonical projections $(x, y) \mapsto x$ and $(x, y) \mapsto y$, respectively. 
		\end{problem2}

		\
	
		\noindent The measures $\gamma \in \mathscr{P}(X \times Y)$ are called \textbf{transport plans}. We denote the set of all transport plans with marginals $\mu$ and $\nu$ by $\Pi(\mu, \nu)$. The value $C(\mu, \nu) = \inf_{\gamma \in \Pi(\mu, \nu)} \int_{X \times Y} c(x, y) ~d\gamma(x, y)$ is called \textbf{optimal cost}. Note that $(\text{proj}_{X})_{*}\gamma=\mu$ is equivalent to $\gamma[A \times Y]=\mu[A]$, for every $A \in \mathcal{B}(X)$, and $(\text{proj}_{Y})_{*}\gamma=\nu$ is equivalent to $\gamma[X \times B]=\nu[B]$, for every $B \in \mathcal{B}(Y)$. Moreover, it is possible to describe this problem in terms of the coupling of measures, in the following sense:
		
		\
		
		\begin{definition} \label{coupling}
			Let $(X, \mu)$ and $(Y, \nu)$ be probability spaces. A \textbf{coupling} of $(\mu, \nu)$ is a pair $(\mathcal{X}, \mathcal{Y})$ of measurable functions in a probability space $(\Omega, \mathbbm{P})$, such that,  $\operatorname{law}(\mathcal{X})=\mu$ and $\operatorname{law}(\mathcal{Y})=\nu$.
		\end{definition}
		
		\

		\noindent Considering $\Omega = X \times Y$, coupling $\mu$ and $\nu$ means to construct $\gamma \in \mathscr{P}(X \times Y)$ with marginals $\mu$ and $\nu$. Then, the Monge-Kantorovich Transport Problem can be understood as the minimization of the total cost, over all possible couplings of $(\mu, \nu)$.

		Whenever these problems are stated in metric spaces, we may choose the cost function to be the distance function itself, which in turn allows us to introduce a distance function between measures.
	
		\
		
		\begin{definition} [Wasserstein distance]
			Let $(X, d)$ be a separable complete metric space. Consider probability measures $\mu$ and $\nu$ on $X$ and  $p \in [1, \infty)$. The \textbf{Wasserstein distance} of order $p$ between $\mu$ and $\nu$ is given by
			\begin{displaymath}
				W_{p}(\mu, \nu):= \Big( \inf\limits_{\gamma \in \Pi(\mu, \nu)}  \int d(x_{1}, x_{2})^{p} ~d\gamma (x_{1}, x_{2})\Big)^{\frac{1}{p}}.
			\end{displaymath}
		\end{definition}
	
		\
	
		\noindent In general, $W_{p}$ is not a distance in the strict sense, because it can take the value $+\infty$. To rule this situation out, it is natural to constrain $W_{p}$ to a subset in which it takes finite values.
	
		\
	
		\begin{definition}[Wasserstein space]
			The \textbf{Wasserstein space} of order $p$ is defined by
			\begin{displaymath}
				\mathscr{P}_{p}(X):= \Big\{ \mu \in \mathscr{P}(X) : \int d(x, \tilde{x})^{p} \mu(dx) < +\infty \Big\}
			\end{displaymath}
			for $\tilde{x} \in X$ arbitrary. 
		\end{definition}
	
		\
		
		\noindent Therefore, $W_{p}$ sets a (finite) distance on $\mathscr{P}_{p}(X)$. It turns out that if $(X, d)$ is a complete separable metric space and $d$ is bounded, then the $p$-Wasserstein distance metrizes the weak topology over $\mathscr{P}(X)$~\cite[Corollary 6.13]{Vil09}. Furthermore, if $X$ is a complete metric space, then so is $\mathscr{P}(X)$ with the $p$-Wasserstein distance~\cite[Theorem 6.18]{Vil09}.
	
		Although the Wasserstein distance is defined for every $p \geq 1$, in this paper we will chose either $p=1$ or $p=2$, as stated later on. The reason is that for $W_{1}$ we can explicitly compute distance bounds, while for $W_{2}$ the space $\mathscr{P}_{2}(X)$ inherits geometric properties of the space $X$. The choice is always indicated throughout the text.
	
		We finish this section recalling two well-known results for later use. The first one is regarding measurable and continuous functions, the Lusin Theorem in a specific form. The other one gives us a ``recipe'' to glue different couplings.
	
		\
	
		\begin{theorem}~\cite[Theorem 2.3.5]{Fed69}.
			\label{thm.Lusin}
			Let $M$ be a locally compact metric space and $N$ a separable metric space. Consider $\mu$ a Borel measure on $M$, $A \subset M$ a measurable set with finite measure and $f : M \to N$ a measurable map. Then, for each $\delta > 0$ there is a closed set $K \subset A$, with $\mu(A \backslash K) < \delta$, such that the restriction of $f$ to $K$ is continuous.
		\end{theorem}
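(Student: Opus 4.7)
The plan is to approximate $f$ on $A$ simultaneously in two ways: uniformly in the target by a ``step function'' taking only finitely many values, and in measure by a closed subset of $A$ on which that step function is constant on each piece of a finite closed partition. The uniform limit of such continuous step functions will then be continuous on a closed set whose complement in $A$ has measure less than $\delta$. The only nontrivial input is inner regularity of $\mu$, which is available because $M$ is locally compact metric and $\mu(A)<\infty$, so the restriction $\mu|_A$ is a finite Radon measure.

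Step one (separability of the target): for each $n\in\N$, use separability of $N$ to write $N=\bigcup_{k\in\N} B(y_{n,k},1/n)$. Disjointifying gives a countable measurable partition
\[
E_{n,k}:=f^{-1}(B(y_{n,k},1/n))\sm\bigcup_{j<k}f^{-1}(B(y_{n,j},1/n))
\]
of $M$. Since $\mu(A)<\infty$ and the $E_{n,k}\cap A$ are pairwise disjoint, choose $N_{n}$ with $\mu\big(A\sm\bigcup_{k\leq N_{n}}E_{n,k}\big)<\delta/2^{n+2}$.

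Step two (inner regularity): for each $k\leq N_{n}$, pick a closed set $C_{n,k}\subset E_{n,k}\cap A$ with $\mu((E_{n,k}\cap A)\sm C_{n,k})<\delta/(N_{n}\,2^{n+2})$. Set $K_{n}:=\bigcup_{k\leq N_{n}}C_{n,k}$; being a finite union of pairwise disjoint closed subsets of $M$, $K_{n}$ is closed in $M$, contained in $A$, and satisfies $\mu(A\sm K_{n})<\delta/2^{n+1}$. Define $f_{n}:K_{n}\to N$ by $f_{n}(x)=y_{n,k}$ whenever $x\in C_{n,k}$. Since the $C_{n,k}$ are finitely many disjoint closed sets, each is relatively open in $K_{n}$, so $f_{n}$ is locally constant, hence continuous, and by construction $d_{N}(f_{n}(x),f(x))<1/n$ for every $x\in K_{n}$.

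Step three (diagonal intersection): let $K:=\bigcap_{n\geq1}K_{n}$, a closed subset of $M$ contained in $A$, with
\[
\mu(A\sm K)\;\leq\;\sum_{n\geq1}\mu(A\sm K_{n})\;<\;\sum_{n\geq1}\delta/2^{n+1}\;<\;\delta.
\]
Each $f_{n}$ restricts to a continuous map on $K$, and $f_{n}\to f$ uniformly on $K$ into the metric space $N$; the uniform limit of continuous functions is continuous, so $f|_{K}$ is continuous, as required. The main obstacle lies in step two: one must extract inner regularity of $\mu$ on sets of finite measure. In Federer's framework a ``Borel measure'' is an outer measure for which Borel sets are measurable, and its restriction to a set of finite measure in a locally compact metric space is Radon, so inner approximation by closed (in fact compact) subsets is standard; modulo this, the argument is a routine diagonal construction.
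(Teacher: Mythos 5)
The paper does not prove this statement; it is quoted verbatim as a citation to Federer's Theorem 2.3.5, so there is no in-paper argument to compare against. Your proposal is a correct and entirely standard proof of Lusin's theorem for maps into a separable metric target: partition $M$ via a disjointified $1/n$-cover of $N$, truncate to finitely many pieces using $\mu(A)<\infty$, shrink each piece to a closed subset by inner regularity, observe that the resulting locally constant maps $f_n$ are continuous on the closed sets $K_n$ and approximate $f$ uniformly to within $1/n$, and pass to the closed intersection $K=\bigcap_n K_n$. The one step you rightly flag as the genuine input is the inner regularity in Step two, and your justification there is slightly stronger than necessary: since the theorem only asks for a \emph{closed} set $K$, you do not need $\mu\llcorner A$ to be Radon (tight), only closed-regular, and every finite Borel measure on an arbitrary metric space is inner regular by closed sets on Borel sets; local compactness of $M$ plays no role in your argument (it would only be needed to upgrade $K$ to a compact set). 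The only residual technicality is that $A$ and the sets $E_{n,k}\cap A$ are merely $\mu$-measurable rather than Borel, which is handled in Federer's framework by Borel regularity of the outer measure (each finite-measure measurable set differs from a Borel set by a null set); with that remark in place the proof is complete.
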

	
		\
	
		\begin{lemma}[Gluing Lemma]~\cite[Chapter 1]{Vil09} \label{gluing} 
			Let $(X_{i}, \mu_{i})$, $i = 1, 2, 3$, be complete separable metric probability spaces. If $(\mathcal{X}_1,\mathcal{X}_2)$ is a coupling of $(\mu_{1}, \mu_{2})$ and $(\mathcal{Y}_2, \mathcal{Y}_3)$ is a coupling of $(\mu_{2}, \mu_{3})$, then one can construct a triple of random variables $(\mathcal{Z}_{1}, \mathcal{Z}_{2}, \mathcal{Z}_{3})$ such	that $(\mathcal{Z}_{1}, \mathcal{Z}_{2})$ has the same law as $(\mathcal{X}_1,\mathcal{X}_2)$ and $(\mathcal{Z}_{2}, \mathcal{Z}_{3})$ has the same law as $(\mathcal{Y}_2, \mathcal{Y}_3)$. If $\mu_{12}$ stands for the law of $(\mathcal{X}_1,\mathcal{X}_2)$ on $X_1 \times X_2$ and $\mu_{23}$ stands for the law of $(\mathcal{X}_2,\mathcal{X}_3)$ on $X_2 \times X_3$, then to construct the joint law $\mu_{123}$ of $(\mathcal{Z}_{1}, \mathcal{Z}_{2}, \mathcal{Z}_{3})$ one just has to glue $\mu_{12}$ and $\mu_{23}$ along their common marginal $\mu_{2}$.
		\end{lemma}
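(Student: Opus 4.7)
The plan is to construct the joint law $\mu_{123}$ directly on $X_1 \times X_2 \times X_3$ by taking a conditional independence product of the two given couplings along their common marginal $\mu_2$, and then read off $(\mathcal{Z}_1, \mathcal{Z}_2, \mathcal{Z}_3)$ as the coordinate projections on this triple product space. Since each $X_i$ is a complete separable metric space, so are $X_1 \times X_2$ and $X_2 \times X_3$. Applying Rokhlin's Disintegration Theorem to the canonical projections onto $X_2$ of $\mu_{12}$ and $\mu_{23}$ (which by hypothesis both push forward to $\mu_2$), I obtain measurable families $\{\mu_{12}^{x_2}\}_{x_2 \in X_2}$ of probabilities on $X_1$ and $\{\mu_{23}^{x_2}\}_{x_2 \in X_2}$ of probabilities on $X_3$ satisfying
\[
\mu_{12} = \int_{X_2} \mu_{12}^{x_2} \otimes \delta_{x_2} \, d\mu_2(x_2), \qquad \mu_{23} = \int_{X_2} \delta_{x_2} \otimes \mu_{23}^{x_2} \, d\mu_2(x_2).
\]

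Next, I would define $\mu_{123}$ by
\[
\mu_{123}(A) := \int_{X_2} \int_{X_1 \times X_3} \mathbf{1}_A(x_1, x_2, x_3) \, d\bigl(\mu_{12}^{x_2} \otimes \mu_{23}^{x_2}\bigr)(x_1, x_3) \, d\mu_2(x_2)
\]
for every Borel $A \subset X_1 \times X_2 \times X_3$. To verify the marginal identities, I would test on cylinders: for a set of the form $A_{12} \times X_3$ the inner integral against $\mu_{23}^{x_2}$ contributes a factor of $1$, and what remains is precisely the disintegration formula recovering $\mu_{12}(A_{12})$; the symmetric calculation on $X_1 \times A_{23}$ yields $\mu_{23}(A_{23})$. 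Letting $(\mathcal{Z}_1, \mathcal{Z}_2, \mathcal{Z}_3)$ be the coordinate projections on $(X_1 \times X_2 \times X_3, \mu_{123})$ then produces a triple whose pairwise joint laws coincide with $\mu_{12}$ and $\mu_{23}$, which by Definition~\ref{coupling} equal the laws of $(\mathcal{X}_1, \mathcal{X}_2)$ and $(\mathcal{Y}_2, \mathcal{Y}_3)$, respectively.

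The main technical obstacle is verifying that the map $x_2 \mapsto \mu_{12}^{x_2} \otimes \mu_{23}^{x_2}$ is a measurable probability kernel from $X_2$ into $X_1 \times X_3$, so that the iterated integral above defines a genuine Borel probability measure on the triple product. This is precisely where the Polish assumption is indispensable: it guarantees the existence of regular conditional probabilities depending measurably on the conditioning variable, and the product of two such kernels sharing a common base retains measurability via a monotone class argument on generating rectangles. Once this measurability step is secured, the remaining verifications reduce to standard Fubini-type bookkeeping, and the final assertion of the lemma amounts to the observation that the construction above is exactly what it means to glue $\mu_{12}$ and $\mu_{23}$ along $\mu_2$.
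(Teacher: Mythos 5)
Your construction is correct: disintegrating $\mu_{12}$ and $\mu_{23}$ over the common marginal $\mu_2$ and forming the fibrewise product kernel is precisely the standard gluing argument, and your treatment of the measurability of $x_2 \mapsto \mu_{12}^{x_2}\otimes\mu_{23}^{x_2}$ via rectangles and a monotone class argument is the right way to secure the only delicate point. The paper does not prove this lemma itself but cites \cite[Chapter 1]{Vil09}, and your argument is essentially the one given there, so there is nothing to reconcile.
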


\

	\section{Disintegration of measures}\label{sec.disintegra}
	
		In order to grasp some of the properties of the probability spaces while studying disintegration of measures, we would like to associate the latter with the Optimal Transport Theory. Before doing so, we prove the following disintegration theorem. Our proof is based on the idea of choosing a dense subset of a vector space using separability. Then, we extend a tailored made linear functional to the whole space, which is implicitly used in the proof of Dellacherie and Meyer~\cite[III-70]{DM78}, although our conditions are different.
	
		\
	
		\begin{maintheorem} \label{conditional}
			Let $X$ and $Y$ be locally compact and separable metric spaces. Let $\pi: X \to Y$ be a Borel map, and take $\mu \in \mathcal{M}_{+}(X)$, where $\mathcal{M}_{+}(X)$ is the set of all positive and finite Radon measures on $X$. Define $\nu = \pi_{*}\mu$ in $\mathcal{M}_{+}(Y)$. Then, there exist measures $\mu_{y} \in \mathcal{M}_{+}(X)$, such that,
			\begin{enumerate}
				\item $y \mapsto \mu_{y}$ is a Borel map and $\mu_{y} \in \mathscr{P}(X)$ for $\nu$-almost every $y \in Y$;
				\item $\mu = \nu \otimes \mu_{y}$, that is, $\mu(A)= \int_{Y} \mu_{y} (A) ~d\nu(y)$ for every $A \in \mathcal{B}(X)$;
				\item $\mu_{y}$ is concentrated on $\pi^{-1}(y)$ for $\nu$-almost every $y \in Y$.
			\end{enumerate}
		\end{maintheorem}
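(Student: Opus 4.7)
The plan is to build the family $(\mu_y)_{y\in Y}$ fibre by fibre through a Riesz representation argument, after first using Radon--Nikodym to define the candidate expectations $\int f\,d\mu_y$ on a countable dense family of test functions, and finally upgrading the construction to all of $C_c(X)$ by continuous extension on a single $\nu$-null set. Concretely, for each $f \in C_c(X)$ the formula $\lambda_f(B) := \int_{\pi^{-1}(B)} f\, d\mu$ defines a finite signed Borel measure on $Y$ with $|\lambda_f(B)| \le \norm{f}_\infty \nu(B)$, so $\lambda_f \ll \nu$. The Radon--Nikodym theorem then yields a Borel function $g_f : Y \to \R$, bounded by $\norm{f}_\infty$, such that
\begin{displaymath}
    \int_{\pi^{-1}(B)} f\, d\mu \;=\; \int_B g_f(y)\, d\nu(y) \fa B \in \mathcal{B}(Y).
\end{displaymath}

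Because $X$ is locally compact and separable, $C_c(X)$ is separable in the uniform norm; fix a countable $\mathbb{Q}$-vector subspace $\mathcal{D} \subset C_c(X)$ which is dense and closed under the lattice operations $f \vee 0$. For each $f \in \mathcal{D}$, the function $g_f$ is only defined $\nu$-almost everywhere, and the natural identities $g_{\alpha f + \beta h} = \alpha g_f + \beta g_h$ and $g_f \ge 0$ whenever $f \ge 0$ hold outside $\nu$-null sets that depend on $f, h, \alpha, \beta$. Since $\mathcal{D}$ is countable, there is a single $\nu$-null set $N \subset Y$ outside of which all these identities hold simultaneously, the map $f \mapsto g_f(y)$ is a $\mathbb{Q}$-linear positive functional on $\mathcal{D}$, and $|g_f(y)| \le \norm{f}_\infty$. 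For $y \notin N$, the boundedness allows us to extend $f \mapsto g_f(y)$ uniquely to a positive $\R$-linear continuous functional $L_y$ on $C_c(X)$. Riesz representation then provides a Radon measure $\mu_y \in \mathcal{M}_+(X)$ with $\int f\, d\mu_y = L_y(f)$ for $f \in C_c(X)$, and for $y \in N$ we simply set $\mu_y = 0$ (or any fixed probability) without affecting the statement.

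Verifying the three conclusions is now fairly mechanical. For (1), Borel measurability of $y \mapsto \mu_y$ reduces to measurability of $y \mapsto \int f\,d\mu_y = g_f(y)$ for $f \in C_c(X)$, which holds on $\mathcal{D}$ by construction and extends by uniform approximation; that $\mu_y$ is a probability measure $\nu$-a.e.\ follows by testing on an increasing sequence in $C_c(X)$ with $f_n \uparrow 1$ and using monotone convergence together with $\int_B g_{f_n}\, d\nu = \mu(\pi^{-1}(B)\cap\{f_n>0\}) \to \nu(B)$. For (2), the identity $\mu(A) = \int_Y \mu_y(A)\, d\nu(y)$ holds for $A = \pi^{-1}(B) \cap U$ with $U$ open by approximating $\mathbbm{1}_A$ monotonically from within by $C_c(X)$ functions and invoking the defining Radon--Nikodym identity; the monotone class theorem then extends it to every $A \in \mathcal{B}(X)$.

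The delicate step will be (3), concentration of $\mu_y$ on the fibre $\pi^{-1}(y)$. The idea is to fix a countable basis $\{V_k\}$ of open sets in $Y$ and to show, for each $k$, that the function $y \mapsto \mu_y(\pi^{-1}(V_k^c))$ vanishes $\nu$-a.e.\ on $V_k$: integrating against $\mathbbm{1}_{V_k}\, d\nu$ and using (2) gives $\int_{V_k} \mu_y(\pi^{-1}(V_k^c))\, d\nu(y) = \mu(\pi^{-1}(V_k) \cap \pi^{-1}(V_k^c)) = 0$. Taking a countable union of exceptional sets and intersecting over a neighbourhood basis of each point yields a $\nu$-full set on which $\mu_y$ is supported in every neighbourhood of $\pi^{-1}(y)$; since $X$ and $Y$ are separable metric and $\pi$ is Borel, a standard regularisation (replacing $\pi$ by a Lusin-type continuous restriction via \theoref{thm.Lusin} on compact exhaustions) produces closed fibres along which we can conclude $\mu_y(\pi^{-1}(y)) = \mu_y(X) = 1$. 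This last manoeuvre, reconciling the measurability of $\pi$ with the need for closed fibres so that the intersection of decreasing neighbourhoods is exactly $\pi^{-1}(y)$, is where I expect the main technical care to be needed.
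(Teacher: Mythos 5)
Your construction is essentially the paper's: Radon--Nikodym densities $g_f$ indexed by a countable dense $\mathbb{Q}$-vector space of test functions, a single $\nu$-null set off which $f \mapsto g_f(y)$ is a positive, bounded, $\mathbb{Q}$-linear functional, extension to a positive linear functional on the whole space of continuous functions, and Riesz representation; the only structural difference is that the paper first treats the compact case and then embeds $X$ into a compact metric space, whereas you work with $C_c(X)$ directly (and use the unique continuous extension from a dense subspace rather than Hahn--Banach, which is if anything cleaner). The step you flag as delicate, conclusion (3), in fact closes without any Lusin-type regularisation: since $\pi^{-1}$ commutes with arbitrary intersections, once your integration argument gives a single $\nu$-null set off which $\mu_y$ is concentrated on $\pi^{-1}(V_k)$ for every basic open $V_k \ni y$, choosing a countable decreasing neighbourhood basis $(V_{k_j})_j$ of $y$ yields $\mu_y\bigl(\pi^{-1}(y)\bigr) = \mu_y\bigl(\pi^{-1}\bigl(\bigcap_j V_{k_j}\bigr)\bigr) = \lim_j \mu_y\bigl(\pi^{-1}(V_{k_j})\bigr) = \mu_y(X)$, with no need for closed fibres. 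The only other slip is cosmetic: $\int_B g_{f_n}\,d\nu$ equals $\int_{\pi^{-1}(B)} f_n\,d\mu$ rather than $\mu(\pi^{-1}(B)\cap\{f_n>0\})$, but it still converges to $\nu(B)$ by monotone convergence, so your normalisation argument stands.
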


		\begin{proof} 
			We shall first consider the disintegration of measures on compact metric spaces. Then, we tackle the general case as it is stated.\\

			\noindent \textbf{Step 1:} To get started, consider $X$ to be a compact metric space with its Borel $\sigma$-algebra $\mathcal{B}(X)$, and let $\mu$ be a Radon measure on $X$. If $(Y, \mathcal{E})$ is a measurable space, we define a measurable map $q: \mathcal{B}(X) \to \mathcal{E}$, so that we set $\nu=q_{*}\mu$. Let $C(X)$ be the set of all continuous real functions $\omega :X \to \mathbb{R}$. Then, for each $\omega \in C(X)$, we associate a measure $\lambda$ given by
			\begin{displaymath}
				\lambda(A)= \int\limits_{q^{-1}(A)} \omega (x) ~d\mu (x)
			\end{displaymath}
			for every $A \in \mathcal{E}$. The measure $\lambda$ is absolutely continuous with respect to $\nu$. Indeed, for every $A \in \mathcal{E}$ we have that $\nu(A) = \mu(q^{-1}(A)) = 0$ implies $\lambda(A)=0$. Therefore, since $\nu$ and $\lambda$ are positive measures and $\lambda \ll \nu$, by the Radon-Nikodym Theorem, there exists $h: Y \to [0, \infty]$, the density of $\lambda$ with respect to $\nu$, such that, $\lambda(A) = \int_{A} h ~d\nu$ for $A \in \mathcal{E}$.  Thus,
		
			\begin{equation}\label{eq.RNdensi}
				\int\limits_{A} h(y) ~d\nu(y) = \int\limits_{q^{-1}(A)} \omega(x) ~d\mu(x).
			\end{equation}
		
			Recall that  $C(X)$ is a separable space with the supremum norm. Let $\mathcal{H} = \{ \omega_1 \equiv 1, \omega_2, \omega_3, \dots \}$ be a dense subset of $C(X)$. Suppose, without loss of generality, that $\mathcal{H}$ is a vector space over $\mathbb{Q}$. Then, for each $n \in \mathbb{N}$ we consider the Radon-Nikodym density $h_{n}$ associated with $\omega_n$ given by~(\ref{eq.RNdensi}), so that, for each $A \in \mathcal{E}$,
			\begin{equation}\label{eq.RNdensIndex}
				\int\limits_{A} h_{n}(y) ~d\nu(y) = \int\limits_{q^{-1}(A)} \omega_n(x) ~d\mu(x).
			\end{equation}
			Note that $h_{n} \geq 0$ almost always, if $\omega_n \geq 0$, since $\nu=q_{*}\mu$.
			\\
		
			\noindent \textbf{Step 2:} We will use the associated densities to construct a linear functional which will be extended using the Hahn-Banach Theorem. In order to do so, we denote by $\mathbb{A}$ the set of all $y \in Y$, such that, if $\omega_i= \alpha \omega_j + \beta \omega_k$, then we have for their associated densities that the relation $h_{i}(y) =  \alpha h_{j}(y) + \beta h_{k}(y)$ holds true, where $\alpha, \beta \in \mathbb{Q}$, and the associated density to $\omega_{1}$ is set to $h_{1}(y) = 1$. The set $\mathbb{A}$ is measurable and $\nu(\mathbb{A})=1$. Indeed,
		
			\begin{align*}
				\int_{q^{-1}(Y)} \omega_i(x) ~d\mu &= \int_{q^{-1}(Y)} (\alpha \omega_j + \beta \omega_k)(x) ~d\mu \\
				&= \alpha \Big( \int_{q^{-1}(Y)} \omega_j(x) ~d\mu  \Big) + \beta \Big( \int_{q^{-1}(Y)} \omega_k(x) ~d\mu \Big), \text{ which by~(\ref{eq.RNdensIndex}), }\\
				&= \alpha \Big( \int_Y h_{j}(y) ~d\nu \Big) + \beta \Big( \int_Y h_{k}(y) ~d\nu \Big) \\
				&= \int_Y \alpha  h_{j}(y) + \beta h_{k}(y) ~d\nu,
			\end{align*}
			so $ \int  h_{i}(y) ~d\nu = \int \alpha  h_{j}(y) + \beta h_{k}(y) ~d\nu$. Consequently, $h_{i}(y) =  \alpha h_{j}(y) + \beta h_{k}(y)$ $\nu$-almost always. Therefore, whenever $\omega_{n}$ is a linear combination of elements of $\mathcal{H}$, then the associated densities defined by~(\ref{eq.RNdensIndex}) can also be written point-wise as linear combinations of Radon-Nikodym densities.\\
		
			\noindent \textbf{Step 3:} For each $y \in \mathbb{A}$, we define the functional $\tilde{\varphi_y}: \mathcal{H} \to \mathbb{R}$, given by $\tilde{\varphi_y}(\omega_n):=h_{n}(y)$. Note that $\tilde{\varphi_y}: \mathcal{H} \to \mathbb{R}$ is $\mathbb{Q}$-linear with $\| \tilde{\varphi_y} \| \leq 1$ and, since $\tilde{\varphi_y} (1)=1$, we have actually that $\| \tilde{\varphi_y} \| = 1$. Therefore, by the Hahn-Banach Theorem, $\tilde{\varphi_y}$ can be extended to a continuous positive linear functional $\varphi_y: C(X) \to \mathbb{R}$, with $\| \varphi_y \| = 1$. Furthermore, the Riesz-Markov-Kakutani Representation Theorem assures that there exists a unique Radon measure $\mu_y$ on $X$, such that, $\varphi_y(\omega)= \int \omega ~d \mu_y$ for every $\omega \in C(X)$ and $\varphi_y (1)=1$. Note that $\int \omega ~d \mu_y \leq 1$, since $\| \varphi_y \| = 1$. Thus, we conclude that $\mu_y$ is a probability measure. Note also that $\mu_y$ is supported on $q^{-1} \{ y \in \mathbb{A}\}$. For $y \notin \mathbb{A}$, consider $\mu_y = 0$.\\
		
			\noindent \textbf{Step 4:} Observe that $y \mapsto \int_{X} \omega_n ~d\mu_y$ is $\mathcal{E}$-measurable for every $\omega_n \in \mathcal{H}$ and
			\begin{displaymath}
				\int_{Y} \int_{q^{-1}(y)} \omega_n(x) ~d\mu_y ~d\nu = \int_{Y} \varphi_y(\omega_n) ~d\nu = \int_{Y} h_{n}(y) ~d\nu =  \int_{X} \omega_n(x) ~d \mu.
			\end{displaymath}
			By the definition of $\mathcal{H}$, we have that for each $\omega \in C(X)$, there exists a sequence $(\omega_{i})_{i}$, with $\omega_i \to \omega$ uniformly. So, by uniform convergence, we have that $y \mapsto \int_{X} \omega ~d\mu_y$ is $\mathcal{E}$-measurable for every $\omega \in C(X)$. Furthermore,
			\begin{displaymath}
				\int_{Y} \int_{q^{-1}(y)} \omega(x) ~d\mu_y ~d\nu =  \int_{X} \omega(x) ~d \mu.
			\end{displaymath}
			The same holds true for any bounded and $\mathcal{B}(X)$-measurable function $\omega$. Indeed, denote by $\mathscr{C}$ the class of functions such that $y \mapsto \int_{X} \omega ~d\mu_y$ is $\mathcal{E}$-measurable and $\int_{Y} \int_{q^{-1}(y)} \omega(x) ~d\mu_y ~d\nu =  \int_{X} \omega(x) ~d \mu$. Note that $C(X) \subset \mathscr{C}$, from what was shown before. If $A \subset X$ is an open set, then $\mathbbm{1}_{A} \in \mathscr{C}$. So, let $\mathcal{D}$ be the set of all characteristic functions in $\mathscr{C}$. If $\mathbbm{1}_{A_n} \in \mathcal{D}$ for $n \in \mathbb{N}$, we have that $\mathbbm{1}_{\cup A_n} \in \mathcal{D}$. If $\mathbbm{1}_{A} \in \mathcal{D}$, we have that $\mathbbm{1}_{A^c} \in \mathcal{D}$. Thus, the class of measurable sets whose characteristic functions are in $\mathcal{D}$ is $\mathcal{B}(X)$. Therefore, the result follows by monotone convergence. This completes the proof of Theorem~\ref{conditional} for compact spaces. \\
		
			\noindent \textbf{Step 5:} Let $X$ be a locally compact and separable metric space. Then, the set of continuous real-valued functions with compact support on $X$, denoted by $C_c(X)$, is a vector space. Such a vector space can be seen as the union of the spaces $C_c(K_{i})$ of continuous functions with support on compact sets $K_{i}$. Since $\mu$ is a Radon measure on $X$, the map $\varphi: C_c(X) \to \mathbb{R}$, such that, $\omega \mapsto \int_{X} \omega(x) ~d\mu$, is a continuous positive linear map. Note also that $\mu$ is supported on a set $\tilde{\mathcal{K}}$, which is a countable union of compact subsets $K_{i} \subset X$. So, we can imbed $X$ into a compact metric space $\mathcal{K}$ and identify $\mu$ with a measure on $\mathcal{K}$ with support on $\tilde{\mathcal{K}}$, and construct the measures $\mu_y$ as above. Consider $\mu_{y}=0$ for $y$ such that $\pi^{-1}(y) \notin \tilde{\mathcal{K}}$. Thus, there exist probability measures $\mu_{y}$ on $X$, such that, each $\mu_{y}$ is supported on $\pi^{-1}(y)$ and, for every $\omega \in C_c(X)$,
			\begin{equation} \label{dis_eq}
				\int_{Y} \int_{\pi^{-1}(y)} \omega(x) ~d\mu_y ~d\nu =  \int_{X} \omega(x) ~d \mu.
			\end{equation}
		
			Since $Y$ is a locally compact and separable metric space and $\pi: X \to Y$ is a Borel map, then $y \mapsto \mu_y$ is a Borel map for $\nu$-almost every $y \in Y$. Furthermore, note that \eqref{dis_eq} is equivalent to say that $\mu(A)= \int_{Y} \mu_{y} (A) ~d\nu(y)$ for every $A \in \mathcal{B}(X)$ and $\mu_{y}$ is concentrated on $\pi^{-1}(y)$ for $\nu$-almost every $y \in Y$. This concludes the proof.
		\end{proof}
	
		\
	
		Many interesting examples arise when we consider Theorem \ref{conditional} for the case of product spaces with the Borel map $\pi$ as the canonical projection on the first component, as follows.
	
		\
	
		\begin{corollary} \label{desintegra_prod}
			Let $X$ and $Y$ be locally compact and separable metric spaces. Let $\text{proj}_{X}: X \times Y \to X $ be the canonical projection on the first component, take $\gamma \in \mathcal{M}_{+}(X \times Y)$ and set $\mu = {\text{proj}_{X}}_{*}\gamma \in \mathcal{M}_{+}(X)$. Then, there exist measures $\gamma_{x} \in \mathcal{M}_{+}(X \times Y)$, such that,
			\begin{enumerate}
				\item $x \mapsto \gamma_{x}$ is a Borel map and $\gamma_{x} \in \mathscr{P}(X \times Y)$ for $\mu$-a. e. $x \in X$;
				\item $\gamma = \mu \otimes \gamma_{x}$, i. e. , $\gamma(A)= \int_{X} \gamma_{x} (A) ~d\mu(x)$ for every $A \in \mathcal{B}(X \times Y)$;
				\item $\gamma_{x}$ is concentrated on $\text{proj}_{X}^{-1} (x)$ for $\mu$-almost every $x \in X$.
			\end{enumerate}
		\end{corollary}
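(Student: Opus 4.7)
The plan is to obtain this corollary as a direct specialisation of Theorem~\ref{conditional}. Since the corollary is the case in which the ambient space is a product and the Borel map is a coordinate projection, nothing new needs to be invented; the task is only to verify that the hypotheses of the theorem transfer to this setting.

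First I would check the topological hypotheses. If $X$ and $Y$ are locally compact and separable metric spaces, then so is $X \times Y$ (the product of separable metric spaces is separable metric under any of the standard product metrics, and local compactness is preserved under finite products). Hence $X \times Y$ is a valid choice for the role played by ``$X$'' in Theorem~\ref{conditional}, and $X$ is a valid choice for the role of ``$Y$''. The map $\text{proj}_{X}: X \times Y \to X$ is continuous, hence Borel, so it plays the role of $\pi$.

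Next I would apply Theorem~\ref{conditional} with the substitutions above, taking the Radon measure $\gamma \in \mathcal{M}_{+}(X \times Y)$ in place of $\mu$, and $\mu = (\text{proj}_{X})_{*} \gamma$ in place of $\nu$. The theorem then produces measures $\gamma_{x} \in \mathcal{M}_{+}(X \times Y)$ for $x \in X$ enjoying the three properties: (i) $x \mapsto \gamma_{x}$ is Borel and $\gamma_{x} \in \mathscr{P}(X \times Y)$ for $\mu$-almost every $x$; (ii) the disintegration identity $\gamma(A) = \int_{X} \gamma_{x}(A) \, d\mu(x)$ holds for every $A \in \mathcal{B}(X \times Y)$; and (iii) $\gamma_{x}$ is concentrated on $\text{proj}_{X}^{-1}(x) = \{x\} \times Y$ for $\mu$-almost every $x$. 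These are exactly the three assertions of the corollary.

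Since the proof is a direct reduction, there is essentially no obstacle; the only point that deserves a line of comment is the observation that the fibre $\text{proj}_{X}^{-1}(x)$ is the slice $\{x\} \times Y$, which makes concentration of $\gamma_{x}$ on this slice the familiar statement that the conditional measure ``lives over $x$''. I would conclude by remarking that this corollary is precisely what allows one, in later sections, to view $\gamma_{x}$ either as a measure on $X \times Y$ supported on $\{x\} \times Y$ or, via the evident identification, as a probability measure on $Y$, depending on what is most convenient.
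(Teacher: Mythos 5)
Your proposal is correct and matches the paper's intent exactly: the corollary is stated as an immediate specialisation of Theorem~\ref{conditional} to the ambient space $X \times Y$ with $\pi = \text{proj}_X$, and the paper offers no separate argument beyond this substitution. Your verification that $X \times Y$ inherits local compactness and separability and that the projection is Borel is precisely the (routine) content needed.
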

	 
		\
	 
		In fact, since $\gamma_{x}$ is concentrated on $\text{proj}_{X}^{-1}(x)=\{x\} \times Y$, we can consider each $\gamma_{x}$ as a measure on $Y$, writing $\gamma(B)=\int_{X} \gamma_{x}(\{y : (x, y) \in B \}) ~d\mu$ for every $B \in \mathcal{B}(X \times Y)$, adding a different point of view to disintegration. The following example illustrates this case of disintegration of measures.
	
		\

		\begin{example} \label{example1}

			Consider a Solid Torus $S^{1} \times D^{2}$. Let $\mathcal{F}^{s} = \{\{x\} \times D^{2} \}_{x \in S^{1}}$ be a foliation of $S^{1} \times D^{2}$, as represented in Figure \ref{toro}. Given a measure $\gamma \in \mathscr{P}(S^{1} \times D^{2})$, let $\text{proj}_{S^1}: S^{1} \times D^{2} \to S^{1}$ be the canonical projection on the first component and set $\mu={\text{proj}_{S^1}}_{*}\gamma$. The Theorem \ref{conditional}, with $\pi := \text{proj}_{S^1}$, gives us a disintegration $\{ \gamma_{x} : x \in S^{1} \}$ of $\gamma$ along the leaves. Since the measures $\gamma_{x}$ are concentrated on ${\text{proj}}_{S^1}^{-1}(x)=\{x\} \times D^2$ for $\mu$-a.e. $x \in S^{1}$, we can consider each $\gamma_{x}$ as a measure on $D^{2}$. That is, we can define a probability on $D^2$ for each $x \in S^1$.
	
			\begin{figure}[h!]
				\centering
				\includegraphics[scale=0.5]{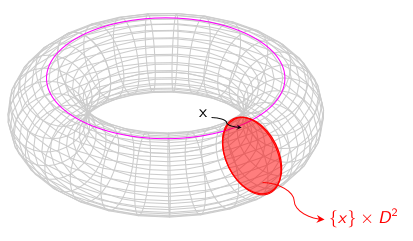}
				\caption{Representation of $\mathcal{F}^{s} = \{\{x\} \times D^{2} \}_{x \in S^{1}}$.} \label{toro}
			\end{figure}	
		\end{example}

		\
	
		The point of view from Corollary \ref{desintegra_prod} is, somehow, a generalisation of cases of disintegration of a probability measure along leaves in a foliated compact Riemannian manifold, as in Example \ref{example1}. In this sense, we remark that different versions of disintegration theorems can be related by taking suitable hypotheses, as we do in the following example.
	
		\
	
		\begin{example} \label{example_pro}
			Let $M_1$ and $M_2$ be compact Riemannian manifolds and set the product space $\Sigma = M_{1} \times M_{2}$. Let $\mathcal{F}^{s}=\{ \{x\} \times M_{2} \}_{x \in M_{1}}$ be a foliation of $\Sigma$. Given $\gamma \in \mathscr{P}(M_1 \times M_2)$, let ${\text{proj}}_{M_{1}} : \Sigma \to M_{1}$ be the canonical projection on $M_{1}$ and set $\mu={{\text{proj}}_{M_{1}}}_{*}\gamma$. By Theorem \ref{conditional}, there exists a family $\{ \gamma_{x} : x \in M_{1} \} \subset \mathscr{P}(M_{1} \times M_{2})$, such that,
			\begin{enumerate}
				\item $x \mapsto \gamma_{x}$ is a Borel map and $\gamma_{x} \in \mathscr{P}(M_{1} \times M_{2})$ for $\mu$-a. e. $x \in M_{1}$;
				\item $\gamma(A)= \int_{M_{1}} \gamma_{x} (A) ~d\mu(x)$ for every $A \in \mathcal{B}(M_{1} \times M_{2})$;
				\item $\gamma_{x}$ is concentrated on $\text{proj}_{M_{1}}^{-1} (x)$ for $\mu$-a. e. $x \in M_{1}$.
			\end{enumerate}
			Furthermore, we can consider each $\gamma_{x}$ as a probability on $M_{2}$, as stated above. Note that this result agrees with Rokhlin's Disintegration Theorem. In fact, let $\varrho: \Sigma \to \mathcal{F}^{s}$ be a map that associates each point $(x, y) \in \Sigma$ to the $\zeta$ element of $\mathcal{F}^{s}$ that contains $(x, y)$. Consider $\hat{\gamma}=\varrho_{*}\gamma$. Note that $\mathcal{F}^{s}$ is a measurable partition of $\Sigma$ and $\Sigma$ is a complete separable metric space. So, Rokhlin's Disintegration Theorem describes a disintegration of $\gamma$ relative to $\mathcal{F}^{s}$ by  a family $\{ \gamma_{\zeta} : \zeta \in \mathcal{F}^{s} \}$, such that, for $E \subset \Sigma$ measurable set,
			\begin{enumerate}
				\item $\gamma_{\zeta}(\zeta)=1$, for $\hat{\gamma}$-almost every $\zeta \in \mathcal{F}^{s}$;
				\item $\zeta \mapsto \gamma_{\zeta}(E)$ is measurable;
				\item $\gamma(E)=\int \gamma_{\zeta}(E) ~d\hat{\gamma}(\zeta)$.
			\end{enumerate}
			Rewrite $\mathcal{F}^{s}$ by $\{ \zeta_{x} \}_{x \in M_{1}}$, where $\zeta_{x}=\{ x \} \times M_{2}$ for each $x \in M_{1}$, and consider $\gamma_{x}'={{\text{proj}}_{M_1}}_{*}\gamma$. For each $x \in M_{1}$, let $\gamma_{\zeta_{x}}$ be the restriction of $\gamma_{\zeta}$ to $\zeta = \{ x \} \times M_{2}$. Note that $\varrho^{-1}(\zeta)= \{ x \} \times M_{2} = \zeta_{x} = {{\text{proj}}_{M_1}}^{-1}(x)$, so
			\begin{align*}
				\int \gamma_{\zeta}(E) d\hat{\gamma}(\zeta) &= \int \gamma_{\zeta}(E) \gamma(\varrho^{-1}(d\zeta)) \\
				&= \int_{M{1}} \gamma_{\zeta_{x}}(E \cap \zeta_{x}) \gamma({{\text{proj}}_{M_1}}^{-1}(dx)) \\
				&= \int_{M{1}} \gamma_{\zeta_{x}}(E \cap \zeta_{x}) d \gamma_{x}'
			\end{align*}
			and then
			\begin{displaymath}
				\gamma(E)=\int_{M{1}} \gamma_{\zeta_{x}}(E \cap \zeta_{x}) d \gamma_{x}'.
			\end{displaymath}
			Moreover, note that $\gamma_{\zeta_{x}}$ is supported in ${{\text{proj}}}_{M_1}^{-1}(x) = \zeta_{x}$. Hence, we have a disintegration $\{\gamma_{\zeta_{x}} : \zeta_{x} \in \mathcal{F}^{s} \}$ along the leaves of $\Sigma$ associated to $\gamma_{x}'$. In addition, it is possible to identify  ${{\text{proj}}}_{M_1}^{-1}(x)$ with $M_{2}$, and write this disintegration by $\{\gamma_{x} : x \in M_1 \} \subset \mathscr{P}(M_{2})$, as desired.	
		\end{example}
	
		\
	
		Such example is one of the possible roles of disintegration of measures. In Dynamics, for instance, this kind of disintegration appears in several contexts. To name a few, in~\cite{BM17} the regularity of this kind of disintegration is investigated considering invariant measures for hyperbolic skew products. Specifically, for this purpose, a function that associates each $x$ in $X$ with a probability measure $\gamma_x$ obtained via a disintegration of $\gamma$ is analysed. In the next section, we will see that this type of application can be thought in a more general framework and it has important properties. We can also cite \cite{Gal17, GL20}, where the disintegration of Example \ref{example1} is used to study the behaviour of the transfer operator in a solenoidal map.
	
		We can actually obtain uniqueness and absolute continuity of the disintegration in the context of Theorem \ref{conditional}. If $\lambda \in \mathcal{M}_{+}(Y)$ is another measure, such that, there exists a Borel map $y \mapsto \eta_{y}$ for which $\mu = \lambda \otimes \eta_{y}$, with $\eta_{y}$ concentrated on $\pi^{-1}(y)$ for $\mu$-almost every $y \in Y$, then $\lambda |_{C} \ll \nu$, where $C=\{ y \in Y : \eta_{y}(X)>0 \}$, and $\eta_{y} \ll \mu_{y}$ for $\nu$-almost every $y \in Y$. See the details in \cite{AP03}. In the following proposition, we focus on the case of product spaces. Taking $\gamma \in \mathcal{M}_{+}(X \times Y)$ with $\mu={\text{proj}_{X}}_{*}\gamma$, we obtain uniqueness of $\gamma_{x}$ and $\mu$ in $\gamma=\mu \otimes \gamma_{x}$.
	
		\
	
		\begin{proposition} \label{uni}
			Let $X \times Y$, $X$ be locally compact and separable metric spaces. Let $\text{proj}_{X}: X \times Y \to X$ be the projection on the first component and consider $\gamma \in \mathcal{M}_{+}(X \times Y)$, $\nu \in \mathcal{M}_{+}(X)$ and $\mu={\text{proj}_{X}}_{*}\gamma$. Let $x \mapsto \eta_x$ be a Borel $\mathcal{M}_{+}$-valued map defined on $X$ such that
			\begin{enumerate}
				\item $\gamma = \nu \otimes \eta_x$;
				\item $\eta_x$  is concentrated on $\text{proj}_{X}^{-1}(x)$ for $\nu$-almost every $x \in X$.
			\end{enumerate}
			Then, the $\eta_{x}$ are uniquely defined $\nu$-almost always by (1) and (2). Moreover, for $C=\{ x \in X: \eta_x(X \times Y)>0 \}$, $\nu|_C$ is absolutely continuous with respect to $\mu$. In particular, $\frac{\nu|_C}{\mu} \eta_x = \gamma_{x}$ for $\mu$-almost every $x \in X$, where $\gamma_{x}$ are the conditional probabilities as in Corollary \ref{desintegra_prod}. 
		\end{proposition}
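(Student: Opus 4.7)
The plan is to proceed in three stages: uniqueness, absolute continuity, and identification of the Radon--Nikodym derivative.

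For uniqueness, suppose $x \mapsto \eta_x'$ is another Borel $\mathcal{M}_+$-valued map satisfying (1) and (2). Then for every $A \in \mathcal{B}(X \times Y)$,
\begin{displaymath}
\int_X \eta_x(A) \, d\nu(x) = \gamma(A) = \int_X \eta_x'(A) \, d\nu(x).
\end{displaymath}
Since $X \times Y$ is separable metric, its Borel $\sigma$-algebra is generated by a countable algebra $\mathcal{A}_0$. Applying the equality above to all $A \in \mathcal{A}_0$ localised inside compact sets (to keep the integrals finite), I can extract a single $\nu$-null set $N$ outside which $\eta_x$ and $\eta_x'$ coincide on $\mathcal{A}_0$, hence on $\mathcal{B}(X \times Y)$. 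This gives $\eta_x = \eta_x'$ for $\nu$-a.e.\ $x$.

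For absolute continuity, the key computation uses condition (2): for any $A \in \mathcal{B}(X)$ and for $\nu$-a.e.\ $x$, the measure $\eta_x$ is concentrated on $\{x\} \times Y = \text{proj}_X^{-1}(x)$, so
\begin{displaymath}
\eta_x\bigl(\text{proj}_X^{-1}(A)\bigr) = \begin{cases} \eta_x(X\times Y) & \text{if } x \in A, \\ 0 & \text{if } x \notin A. \end{cases}
\end{displaymath}
Setting $f(x) := \eta_x(X \times Y)$ and using (1) together with $\mu = (\text{proj}_X)_*\gamma$, this yields
\begin{displaymath}
\mu(A) = \gamma\bigl(\text{proj}_X^{-1}(A)\bigr) = \int_A f(x) \, d\nu(x),
\end{displaymath}
so $d\mu = f \, d\nu$. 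Hence $\mu$ is concentrated on $C = \{f > 0\}$, and if $A \in \mathcal{B}(X)$ satisfies $\mu(A) = 0$, then $\int_A f \, d\nu = 0$ forces $\nu(A \cap C) = 0$. Thus $\nu|_C \ll \mu$ with $\frac{d\nu|_C}{d\mu} = 1/f$ on $C$.

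For the final identity, I would combine both disintegrations of $\gamma$: the one given by hypothesis, $\gamma = \nu \otimes \eta_x$, and the one supplied by Corollary~\ref{desintegra_prod}, $\gamma = \mu \otimes \gamma_x$. Using $d\mu = f \, d\nu$ to rewrite the second decomposition, for every $B \in \mathcal{B}(X \times Y)$,
\begin{displaymath}
\int_X \eta_x(B) \, d\nu(x) = \int_X \gamma_x(B) \, d\mu(x) = \int_X f(x) \, \gamma_x(B) \, d\nu(x).
\end{displaymath}
Since both $x \mapsto \eta_x$ and $x \mapsto f(x)\gamma_x$ are Borel $\mathcal{M}_+$-valued maps with $\eta_x$ (respectively $f(x)\gamma_x$) concentrated on $\text{proj}_X^{-1}(x)$, the uniqueness already proved forces $\eta_x = f(x)\gamma_x$ for $\nu$-a.e.\ $x$. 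Rearranging gives $\gamma_x = f(x)^{-1}\eta_x = \frac{d\nu|_C}{d\mu}(x)\, \eta_x$ for $\mu$-a.e.\ $x \in C$, which is the claimed formula.

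The main obstacle is the uniqueness step: one has to be careful that the countable collection of test sets separates Radon measures on $X \times Y$ pointwise outside a single $\nu$-null set, which is where local compactness and separability are used (to exhaust $X \times Y$ by compacta and generate the Borel $\sigma$-algebra from a countable algebra). Once that is in place, the rest of the argument is a direct computation using condition (2) and the Radon--Nikodym theorem.
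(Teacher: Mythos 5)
Your proposal follows essentially the same three-stage route as the paper: uniqueness via a countable generating family of test sets, absolute continuity of $\nu|_C$ from condition (2), and identification of $\gamma_x$ by comparing the two disintegrations of $\gamma$; your absolute-continuity step is in fact slightly more informative, since computing $d\mu = f\,d\nu$ with $f(x)=\eta_x(X\times Y)$ exhibits the density explicitly rather than arguing by null sets. One caveat on the uniqueness step: the displayed identity $\int_X \eta_x(A)\,d\nu=\int_X \eta'_x(A)\,d\nu$ for $A$ in a countable algebra does not by itself yield $\eta_x(A)=\eta'_x(A)$ $\nu$-a.e.; you must also localise in the base, i.e.\ test against sets of the form $A\cap \text{proj}_X^{-1}(B)$ for arbitrary Borel $B\subset X$ and use condition (2) to reduce to $\int_B\eta_x(A)\,d\nu=\int_B\eta'_x(A)\,d\nu$ for all $B$, which is exactly how the paper proceeds. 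This is easily supplied (e.g.\ by taking the generating algebra to consist of finite unions of product sets), but as written your uniqueness argument skips the step that actually makes the a.e.\ conclusion legitimate.
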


		\begin{proof}
			Let $\eta_{x}$ and $\eta_{x}'$ be measures satisfying the conditions (1) and (2). Let $(A_{n})_{n}$ be a sequence of open sets such that the finite intersection is also an open set which generates $\mathcal{B}(X \times Y)$, the Borel $\sigma$-algebra of $X \times Y$. Consider $B \in \mathcal{B}(X \times Y)$ and $A=A_n \cap ~{\text{proj}}_{X}^{-1}(B)$, for any $n \in \mathbb{N}$. By the condition (1), we have that
			\begin{equation*}
				\gamma(A)=\int_{X} \eta_{x}(A) d\nu = \int_{B} \eta_{x}(A_n) d \nu
			\end{equation*}
			and
			\begin{equation*}
				\gamma(A)=\int_{X} \eta_{x}' (A) d\nu = \int_{B} \eta_{x}' (A_n) d \nu.
			\end{equation*}
			Therefore,
			\begin{equation*}
				\int_{B} \eta_{x}(A_n) d \nu = \int_{B} \eta_{x}' (A_n) d \nu.
			\end{equation*}
			Given that $B$ is arbitrary, then $\eta_{x}(A_n) = \eta_{x}'(A_n)$ for $\nu$-almost every $x \in X$ and for any $n \in \mathbb{N}$. So, there exists a set $N \subset X$, with $\nu(N)=0$, such that, $\eta_{x}(A_n) = \eta_{x}'(A_n)$  for any $n \in \mathbb{N}$, $x \in (X-N)$. Hence, for $\nu$-almost every $x \in X$, $\eta_{x} = \eta_{x}'$.
	
			Let us denote $C=\{ x \in X : \eta_x(X \times Y)>0 \}$. Let $\mathcal{G} \subset C$ be such that $\mu(\mathcal{G})=0$. So, ${\text{proj}}_{X}^{-1}(\mathcal{G})$ is such that $\gamma ({\text{proj}}_{X}^{-1}(\mathcal{G}))=0$. Therefore, the condition (2) implies
			\begin{equation*}
				0=\int_{X} \eta_{x}({\text{proj}}_{X}^{-1}(\mathcal{G})) d\nu = \int_{\mathcal{G}} \eta_{x} (X \times Y) d\nu.
			\end{equation*}
			Since $\eta_{x} (X \times Y)>0$ in $C \supset \mathcal{G}$, we have that $\nu(\mathcal{G})=0$. That is, $\nu|_{C} \ll \mu$. Moreover, writing $\nu|_{C}=f \mu$ implies that $\gamma = f \mu \otimes \eta_{x}$. But, by the Corollary \ref{desintegra_prod}, $\gamma = \mu \otimes \gamma_{x}$ and then $\frac{\nu|_{C}}{\mu} \eta_{x} = \gamma_{x}$.
		\end{proof}
		
		\
		
	\section{Disintegration maps} \label{sec.disint-maps}

		From the Optimal Transport perspective, Theorem~\ref{conditional}, in fact, deals with the disintegration of transport plans. In this sense, it is possible to define a function from $Y$ to $\mathscr{P}(X)$ with certain properties which actually establishes the link between disintegration of measures and the geometric properties of the measure spaces. We will call such an object the \textbf{disintegration map}. 
	
		\
	
		\begin{definition}[Disintegration map] \label{disint_map}
			Let $X$ and $Y$ be locally compact and separable metric spaces. Consider a measure $\mu \in \mathcal{M}_{+}(X)$, a Borel map $\pi: X \to Y$, and a disintegration of $\mu$ given by Theorem \ref{conditional}, so that $\mu = \nu \otimes \mu_y$. We define the disintegration map:
			\begin{align*}
				f: Y &\to (\mathscr{P}(X), W_{p}) \\
				y & \mapsto \mu_{y},
			\end{align*}
			such that $\mu = \nu \otimes f(y)$, where $W_{p}$ is the p-Wasserstein distance.
		\end{definition}
    
   		\

   		\begin{remark}
   			In order to make clear which measures are associated with the disintegration map, we will say  that ``$f$ is a disintegration map of $\mu$ w.r.t. $\nu$".
   		\end{remark}
   	
   		\

		Although we may define such map to a general $W_{p}$, our main interest will be when either $p=1$ or $p=2$. On the one hand, for $p=1$ there is an explicit formula for the Wasserstein distance which we can apply to study the disintegration of measures in product spaces. On the other hand, when $p=2$, the theory of Optimal Transport allows for a geometric characterization of $\mathscr{P}_{2}(X)$ in terms of the geometric properties of $X$. More precisely, the study of geodesics defined in $\mathscr{P}_{2}(X)$ and the convexity properties of certain functionals along these geodesics play a crucial role in the Metric Theory of Gradient Flows, which allows us to infer geometric properties of $X$ itself. We shall address the case when $p=2$ in the next section. Before that, we use the disintegration maps to further characterize the disintegration of measures in product spaces. In this section, consider the following definition of a disintegration map.
	
		\
	
		\begin{definition}[Disintegration map - product spaces]
			Let $X$ and $Y$ be locally compact and separable metric spaces. Consider $\gamma \in \mathscr{P}(X \times Y)$ and a disintegration of $\gamma$ given by Corollary \ref{desintegra_prod}, so that $\gamma = \mu \otimes \gamma_{x}$. In this case, the disintegration map reads as
			\begin{align*}
				f: X &\to (\mathscr{P}(Y), W_{1}) \\
				x & \mapsto \gamma_{x}
			\end{align*}
			such that $\gamma = \mu \otimes f(x)$.
		\end{definition}
	
		\
	
		In the lines of~\cite{AP03, GM13}, we start by showing the following.
	
		\
	
		\begin{proposition}
			A map $f: X \to (\mathscr{P}(Y), W_{1})$ is a disintegration map if and only if it is Borel.
		\end{proposition}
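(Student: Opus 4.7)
The plan is to prove the two implications separately. The forward direction (disintegration map $\Rightarrow$ Borel) is essentially a restatement of Corollary \ref{desintegra_prod}: if $f(x) = \gamma_x$ arises from some $\gamma \in \mathscr{P}(X \times Y)$ via the corollary applied with $\mu = (\text{proj}_X)_*\gamma$, then item (1) of the corollary gives that $x \mapsto \gamma_x$ is Borel into $\mathscr{P}(X \times Y)$. Since $\gamma_x$ is supported on $\{x\} \times Y$, the natural identification $\mathscr{P}(\{x\} \times Y) \cong \mathscr{P}(Y)$ (an isometry for the Wasserstein distances on each side) transports this Borel-ness to $f \colon X \to (\mathscr{P}(Y), W_1)$.

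For the backward direction (Borel $\Rightarrow$ disintegration map), given a Borel map $f \colon X \to (\mathscr{P}(Y), W_1)$, I would fix an arbitrary $\mu \in \mathscr{P}(X)$ and then construct the corresponding $\gamma \in \mathscr{P}(X \times Y)$ by the formula
\[
\gamma(A) := \int_X f(x)(A_x)\, d\mu(x), \qquad A_x := \{y \in Y : (x,y) \in A\},
\]
and verify that $f$ is the disintegration map of $\gamma$ with respect to $\mu$. The first sub-step is to show the integrand is Borel in $x$: for fixed Borel $B \subseteq Y$, the evaluation $\eta \mapsto \eta(B)$ is Borel on $\mathscr{P}(Y)$ under the weak topology (lower semicontinuous on open sets, extended via a monotone class argument), and since the topology induced by $W_1$ is at least as fine, composition with the Borel map $f$ gives that $x \mapsto f(x)(B)$ is Borel. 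A Dynkin $\pi$-$\lambda$ argument then extends this to joint Borel measurability of $x \mapsto f(x)(A_x)$ for every $A \in \mathcal{B}(X \times Y)$, starting from rectangles $U \times V$ where $f(x)((U \times V)_x) = \mathbbm{1}_U(x) f(x)(V)$.

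Once measurability is established, $\gamma$ is a probability measure by monotone convergence, and the choice $A = B \times Y$ gives $\gamma(B \times Y) = \int_X f(x)(Y)\, d\mu = \mu(B)$, so $(\text{proj}_X)_*\gamma = \mu$. Each $f(x)$ is concentrated on $\text{proj}_X^{-1}(x)$ under the canonical identification, so by the uniqueness part of Proposition \ref{uni}, $f(x)$ coincides $\mu$-almost everywhere with the disintegration $\gamma_x$ produced by Corollary \ref{desintegra_prod}. Hence $f$ satisfies the defining relation $\gamma = \mu \otimes f(x)$ and is a disintegration map.

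The main obstacle is the measurability step: transforming the hypothesis that $f$ is Borel as a map into the metric space $(\mathscr{P}(Y), W_1)$ into measurability of the scalar evaluations $x \mapsto f(x)(B)$ in a form strong enough to iterate over slices $A_x$. This is the one place where the choice of topology on $\mathscr{P}(Y)$ matters; the argument relies on the agreement between the $W_1$-Borel structure and the weak-Borel structure on $\mathscr{P}(Y)$, reducing the verification to lower semicontinuity on opens plus a Dynkin-class extension.
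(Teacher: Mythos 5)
Your backward implication is sound and follows essentially the paper's route: the key point in both is that for open $A\subseteq Y$ the evaluation $\lambda\mapsto\lambda(A)$ is lower semicontinuous for the topology that $W_1$ metrises, so $x\mapsto f(x)(A)$ is Borel as a composition, and a monotone-class/Dynkin extension handles general Borel sets. Your additional steps --- writing $\gamma(A):=\int_X f(x)(A_x)\,d\mu$, checking the first marginal, and invoking the uniqueness statement of Proposition~\ref{uni} --- usefully flesh out what the paper leaves implicit in its closing sentence ``Therefore, $f$ is a disintegration map.''

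The forward implication, however, has a genuine gap. You dispose of it by saying that item (1) of Corollary~\ref{desintegra_prod} ``gives that $x\mapsto\gamma_x$ is Borel,'' but the measurability actually produced in the proof of Theorem~\ref{conditional} (and hence in the corollary) is the \emph{scalar} kind: $x\mapsto\int\omega\,d\gamma_x$ is measurable for each continuous $\omega$, equivalently $x\mapsto\gamma_x(A)$ is measurable for each Borel $A$. The content of the forward implication is precisely to upgrade this to Borel measurability of $f$ as a map into the \emph{metric space} $(\mathscr{P}(Y),W_1)$, i.e.\ to show that preimages of $W_1$-open sets are Borel. Reading the corollary as already asserting the latter makes your argument circular. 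The missing step is the one the paper supplies: by the Kantorovich--Rubinstein duality formula and a countable dense subset $D\subset\text{Lip}_1(Y)$, one writes
\begin{displaymath}
	f^{-1}(B(\nu,r))=\bigcap_{\varphi\in D}\psi_{\varphi}^{-1}((-r,r)),
	\qquad \psi_{\varphi}(x)=\int_Y\varphi\,d(\nu-f(x)),
\end{displaymath}
where each $\psi_{\varphi}$ is Borel by the scalar measurability; since $(\mathscr{P}(Y),W_1)$ is separable (cf.\ Lemma~\ref{lemma.separable}), Borel preimages of balls suffice. Equivalently, one must prove that the $\sigma$-algebra generated by the evaluations $\lambda\mapsto\lambda(A)$ coincides with the Borel $\sigma$-algebra of $W_1$; you invoke one inclusion of this identity in your backward direction, but the forward direction needs the other inclusion, and that is where the duality formula and the separability of $\text{Lip}_1(Y)$ enter. (Your isometric identification of $\mathscr{P}(\{x\}\times Y)$ with $\mathscr{P}(Y)$ via $(\text{proj}_Y)_*$ is fine, but it only transports Borel-ness once Borel-ness into the Wasserstein space has been established.)
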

	
		\begin{proof}		
			Denote by $\text{Lip}_{1}(Y)$ the set of Lipschitz functions whose Lipschitz constants are less or equal $1$. By Ascoli-Arzel\'a Theorem, the space $\text{Lip}_{1}(Y)$ is compact with respect to the uniform convergence \cite[Proposition 3.3.1]{AGS05}. Let $D \subset \text{Lip}_{1}(Y)$ be a countable dense subset and take $\varphi \in \text{Lip}_{1}(Y)$. If the measures $\nu_{1}$, $\nu_{2} \in \mathscr{P}(Y)$ have bounded support, we can use the duality formula to obtain
			\begin{equation*}
				W_{1}(\nu_{1}, \nu_{2}) = \sup\limits_{\varphi \in Lip_{1}(Y)} \Big\{ \int_{Y} \varphi ~d(\nu_{1} -\nu_{2}) \Big\} = \sup\limits_{\varphi \in D} \int_{Y} \varphi ~d(\nu_{1}-\nu_{2});
			\end{equation*}
			see \cite[Section 7.1]{AGS05} for details. For all  $\varphi \in  Lip_{1}(Y)$, 
			\begin{align*}
				\psi_{\varphi}: X &\to \mathbb{R} \\
				x &\mapsto \int_{Y} \varphi ~d(\nu-f(x))
			\end{align*}
			is Borel. For $f$ to be a Borel map it suffices that
			\begin{displaymath}
				f^{-1}(B(\nu, r))= \bigcap\limits_{\varphi \in D} \psi_{\varphi}^{-1}((-r, r)):=A,
			\end{displaymath}
			where $B(\nu, r)$ is an open ball of radius $r$ centred at $\nu$ in $(\mathscr{P}(Y), W_{1})$. In fact, if $x \in A$, then $|\psi_{\varphi}(x)|<r$ for all $\varphi \in D$ and $W_{1}(\nu, f{(x)})<r$ (by the definition of $\psi_{\varphi}$), so that $f{(x)} \in B(\nu, r)$. In the same way, if $f{(x)} \in B(\nu, r)$, then $W_{1}(\nu, f{(x)})<r$ and, by the duality formula,
			\begin{displaymath}
				|\psi_{\varphi}(x)|:=|\int_{Y} \varphi d(\nu-f{(x)})|<r
			\end{displaymath}
			for every $\varphi \in D$, so that $x \in A$. Thus, one way is proven. Conversely, let $A \subset Y$ be an open subset.  Note that 
			\begin{displaymath}
				f(x)(A)=\int_{Y} \mathbbm{1}_A(y) ~df(x).
			\end{displaymath}
			Let $I_{\varphi}$ be a function given by:
			\begin{align}
				I_{\varphi}: (\mathscr{P}(Y), W_{1}) & \to \mathbb{R} \nonumber \\
				\lambda & \mapsto \int_{Y} \varphi(y) ~d\lambda, \label{ifi}
			\end{align} 
			where $\varphi$ is a lower semicontinuous function over $Y$. Since  $W_{1}$ metrises the weak* topology of $\mathscr{P}(Y)$ \cite[Chapter 7]{AGS05}, the function $I_{\varphi}$ is lower semicontinuous. For every $x \in X$, one obtains $\int_{Y} \varphi(y) ~df{(x)}=I_{\varphi}(f(x))$. By assumption, $f$ is Borel and then $f(\cdot)(A):X \to \mathbb{R}$ is a composition of a lower semicontinuous function and a Borel map, so it is a Borel map. Therefore, $f$ is a disintegration map, which concludes the proof.
		\end{proof}
	
		\
	
		The disintegration map can be written in terms of the Monge problem. Given a transport map $T:X \to Y$ and measures $\mu \in \mathscr{P}(X)$, $\nu= T_{*}\mu \in \mathscr{P}(Y)$, the disintegration map is given by $x \mapsto \delta_{T(x)}$, where $\delta_{T(x)}$ is a Dirac measure at $T(x)$. It is possible show that exists a relationship among measures in $(\mathscr{P}(Y), W_{1})$, via push forward of $\mu$ by disintegration maps, and the second marginal of transport plans induced by distinct transport maps.

		\begin{lemma} \label{maps}
			Let $T:X \to Y$ and $S: X \to Y$ be transport maps. Consider a measure $\mu \in \mathscr{P}(X)$ and the applications $f$, and $g$ given by
			\begin{align*}
				f: X &\to (\mathscr{P}(Y), W_{1})\\
				x &\mapsto \delta_{T(x)}
			\end{align*}
			\begin{align*}
				g: X &\to (\mathscr{P}(Y), W_{1}) \\
				x &\mapsto \delta_{S(x)}.
			\end{align*}
			Then, $T_{*}\mu=S_{*}\mu$ if, and only if, $f_{*}\mu=g_{*}\mu$.
		\end{lemma}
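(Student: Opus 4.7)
The plan is to recognize that $f$ and $g$ both factor through the canonical Dirac embedding
\[
\iota: Y \to (\mathscr{P}(Y), W_{1}), \qquad y \mapsto \delta_{y},
\]
namely $f = \iota \circ T$ and $g = \iota \circ S$. With this factorisation, the functoriality of push-forward gives $f_{*}\mu = \iota_{*}(T_{*}\mu)$ and $g_{*}\mu = \iota_{*}(S_{*}\mu)$, and the whole lemma reduces to checking that $\iota$ is a Borel isomorphism onto its image (hence the push-forward by $\iota$ is injective on $\mathscr{P}(Y)$). Observe that $\iota$ is an isometric embedding with respect to $W_{1}$, since $W_{1}(\delta_{y_{1}},\delta_{y_{2}}) = d(y_{1},y_{2})$; in particular $\iota$ is a continuous injection and $\iota(Y)$ is a Borel (indeed closed) subset of $(\mathscr{P}(Y),W_{1})$.

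For the forward implication ($\Rightarrow$), the proof is a one-liner: if $T_{*}\mu = S_{*}\mu$, then for any $A \in \mathcal{B}(\mathscr{P}(Y))$,
\[
f_{*}\mu(A) = \mu(T^{-1}(\iota^{-1}(A))) = (T_{*}\mu)(\iota^{-1}(A)) = (S_{*}\mu)(\iota^{-1}(A)) = g_{*}\mu(A).
\]
For the reverse implication ($\Leftarrow$), I would test against a separating family of continuous functions, leveraging the function $I_{\varphi}$ introduced in the previous proposition. For every bounded continuous $\varphi : Y \to \mathbb{R}$, the map $I_{\varphi}(\lambda) = \int_{Y} \varphi \, d\lambda$ is continuous on $(\mathscr{P}(Y),W_{1})$, and
\[
\int_{Y} \varphi \, d(T_{*}\mu) = \int_{X} \varphi(T(x)) \, d\mu = \int_{X} I_{\varphi}(f(x)) \, d\mu = \int_{\mathscr{P}(Y)} I_{\varphi} \, d(f_{*}\mu),
\]
and analogously with $S$ in place of $T$ and $g$ in place of $f$. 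The assumed equality $f_{*}\mu = g_{*}\mu$ therefore forces $\int \varphi \, d(T_{*}\mu) = \int \varphi \, d(S_{*}\mu)$ for every bounded continuous $\varphi$, and by the Riesz–Markov–Kakutani characterisation of Radon probability measures (which applies since $Y$ is locally compact and separable) this yields $T_{*}\mu = S_{*}\mu$.

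The main obstacle, modest as it is, is the reverse direction, and the only conceptual point there is that one must use a sufficiently rich test class: a direct "evaluate on preimages $\iota(B)$" argument would work, but requires verifying that $\iota(B)$ is Borel in $\mathscr{P}(Y)$ for every $B \in \mathcal{B}(Y)$; routing through $I_{\varphi}$ bypasses this entirely and also echoes the viewpoint developed in the previous proposition, which makes the proof fit naturally into the flow of the paper.
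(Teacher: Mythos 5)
Your proof is correct, and the reverse implication is essentially the paper's own argument: both of you test $f_{*}\mu = g_{*}\mu$ against the functionals $I_{\varphi}(\lambda)=\int_Y \varphi\,d\lambda$ for $\varphi$ continuous on $Y$, unwind the push-forward to get $\int_X \varphi(T(x))\,d\mu = \int_X \varphi(S(x))\,d\mu$, and conclude $T_{*}\mu=S_{*}\mu$ because such $\varphi$ determine a Borel probability measure. The forward implication is where you genuinely diverge. The paper argues by duality on the level of $\mathscr{P}(Y)$: it takes an arbitrary $\psi\in C((\mathscr{P}(Y),W_1))$, restricts it to Dirac masses via $\varphi(y)=\psi(\delta_y)$, and deduces $\int \psi\, d(f_*\mu)=\int\psi\, d(g_*\mu)$, which requires knowing that bounded continuous functions separate Borel measures on the metric space $(\mathscr{P}(Y),W_1)$. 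You instead factor $f=\iota\circ T$, $g=\iota\circ S$ through the Dirac embedding and invoke functoriality of push-forward, $f_*\mu=\iota_*(T_*\mu)$; this is a purely set-theoretic computation needing only that $\iota$ is Borel (it is continuous, indeed isometric), and it makes the forward direction a genuine one-liner with no separation argument at all. The only blemishes are cosmetic: your parenthetical that $\iota(Y)$ is closed requires completeness of $Y$ (harmless, since you never use it), and the fact you want at the end of the reverse direction is more naturally quoted as "Borel probability measures on a metric space are determined by integrals against bounded continuous functions" rather than as Riesz--Markov--Kakutani, though with $Y$ locally compact and separable the latter also suffices.
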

	
		\begin{proof}
			Define $\varphi(y)=\psi(\delta_{y})$, where $\psi \in C((\mathscr{P}(Y), W_{1}))$ is arbitrarily chosen. Then,
			\begin{align*}
				\int_{Y} \psi d(f_{*}\mu) &= \int_{X} \psi(f(x)) d\mu \\
				&= \int_{X} \psi (\delta_{T(x)}) d\mu \\
				&= \int_{X} \varphi(T(x)) d\mu \\
				&= \int_{X} \varphi(S(x)) d\mu \\
				&= \int_{Y} \psi d(g_{*}\mu).
			\end{align*}
			Given the arbitrary choice of $\psi$, it follows that $f_{*}\mu=g_{*}\mu$. 
			Conversely, consider $\varphi \in C(Y)$ and the application $I_{\varphi}$ defined by equation \eqref{ifi}. Note that
			\begin{equation*}
				\int_{\mathscr{P}(Y)} I_{\varphi}(\lambda) d(f_{*}\mu)=\int_{\mathscr{P}(Y)} I_{\varphi}(\lambda) d(g_{*}\mu)
			\end{equation*}
			if and only if
			\begin{equation*}
				\int_{X} I_{\varphi}(f{(x)}) d\mu = \int_{X} I_{\varphi}(g{(x)}) d\mu,
			\end{equation*}
			which in turn occurs if and only if
			\begin{equation*}
				\int_{X} \Big( \int_{Y} \varphi (y) df{(x)} \Big) d\mu =  \int_{X} \Big( \int_{Y} \varphi (y) dg{(x)} \Big) d\mu.
			\end{equation*}
			Since
			\begin{displaymath}
				\int_{Y} \varphi (y) ~df{(x)}=\varphi (T(x))
			\end{displaymath}
			and
			\begin{displaymath}
				\int_{Y} \varphi (y) ~dg{(x)}=\varphi (S(x)),
			\end{displaymath}
			the last equation can be written as
			\begin{displaymath}
				\int_{X} \varphi (T(x)) ~d\mu = \int_{X} \varphi (S(x)) ~d\mu.
			\end{displaymath}
			Due to the arbitrary choice of $\varphi$, it follows that $T_{*}\mu=S_{*}\mu$.
		\end{proof}
	
		\
	
		\begin{corollary}
			Let $T:X \to Y$, $S: X \to Y$ be transport maps. Consider a measure $\mu \in \mathscr{P}(X)$, the disintegration maps $f$ and $g$ defined as in the Lemma \ref{maps}, and the measures $\gamma=\mu \otimes f(x)$ and $\eta=\mu \otimes g(x)$. Then, $f_{*}\mu=g_{*}\mu$ if and only if ${\text{proj}_{Y}}_{*}\gamma = {\text{proj}_{Y}}_{*} \eta$. 
		\end{corollary}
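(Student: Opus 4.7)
The plan is to reduce the corollary directly to Lemma~\ref{maps} by identifying the second marginals ${\text{proj}_Y}_*\gamma$ and ${\text{proj}_Y}_*\eta$ with the push-forwards $T_*\mu$ and $S_*\mu$, respectively. Once this identification is made, the statement follows immediately from Lemma~\ref{maps}.

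First I would compute the second marginal of $\gamma = \mu \otimes f(x) = \mu \otimes \delta_{T(x)}$. By the definition of the disintegration, for every Borel set $B \in \mathcal{B}(Y)$,
\begin{equation*}
	{\text{proj}_Y}_*\gamma(B) = \gamma(X \times B) = \int_X \delta_{T(x)}(B) \, d\mu(x) = \int_X \mathbbm{1}_B(T(x)) \, d\mu(x) = \mu(T^{-1}(B)) = T_*\mu(B).
\end{equation*}
An identical computation, replacing $T$ by $S$ and $f$ by $g$, gives ${\text{proj}_Y}_*\eta = S_*\mu$.

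Next, I would combine these two identities: the equality ${\text{proj}_Y}_*\gamma = {\text{proj}_Y}_*\eta$ holds in $\mathscr{P}(Y)$ if and only if $T_*\mu = S_*\mu$. Then, invoking Lemma~\ref{maps}, this latter equality is equivalent to $f_*\mu = g_*\mu$, which completes the argument.

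There is no real obstacle here; the only subtlety is simply recognising that the disintegration $\mu \otimes \delta_{T(x)}$ is nothing but the law of the map $x \mapsto (x,T(x))$, so that projecting onto $Y$ recovers the push-forward by $T$. Given this, the corollary is a direct consequence of Lemma~\ref{maps}.
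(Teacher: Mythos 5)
Your proposal is correct and matches the paper's intent: the corollary is stated without proof precisely because, once one computes that ${\text{proj}_Y}_*\gamma = T_*\mu$ and ${\text{proj}_Y}_*\eta = S_*\mu$ (exactly as you do), it reduces verbatim to Lemma~\ref{maps}. Your marginal computation is the only missing step, and it is carried out correctly.
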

	
		\
	
		Given $\gamma, \eta \in \Pi (\mu, \nu)$, as defined in Section~\ref{sec.settings}, with $\gamma= \mu \otimes f{(x)}$ and $\eta=\mu \otimes g{(x)}$, we say that $\gamma$ is \textbf{equivalent by disintegration} to $\eta$ (and we denote $\gamma \approx \eta$) if $f_{*}\mu=g_{*}\mu$. With this equivalence in mind, it is possible to define an equivalent class among the transport plans as follows.
	
		\
	
		\begin{definition}
			Given $\gamma \in \Pi(\mu, \nu)$ with  $\gamma=\mu \otimes f(x)$, the transport class of $\gamma$ is defined as the equivalence class $[\gamma]=\{ \eta=\mu \otimes g(x):g_{*}\mu=f_{*}\mu \}$. 
		\end{definition}
	
		\
	
		Thus, we have that all transport plans induced by transport maps belong to the same transport class. Moreover, in the next proposition, we prove that it is possible to use equivalence by disintegration to assure the existence of a transport map.
	
		\
	
		\begin{proposition}
			Consider a transport map $T:X \to Y$, such that $T_{*}\mu=\nu$ and $\gamma=\mu \otimes \delta_{T(x)}$, for a non-atomic measure $\mu \in \mathscr{P}(X)$. If $\eta \in [\gamma]$, then there exists a transport map $S: X \to Y$, such that, $\eta = \mu \otimes \delta_{S(x)}$.
		\end{proposition}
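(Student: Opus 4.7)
The strategy is to show that the hypothesis $g_{*}\mu = f_{*}\mu$ forces $g$ to take values in the set of Dirac measures for $\mu$-almost every $x$, and then to extract a measurable section of $y \mapsto \delta_{y}$ to obtain the desired transport map $S$.

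First I would observe that $f(x) = \delta_{T(x)}$ belongs to the subset $\mathcal{D}(Y) := \{\delta_{y} : y \in Y\} \subset \mathscr{P}(Y)$ for every $x \in X$, so that $f_{*}\mu(\mathcal{D}(Y)) = \mu(f^{-1}(\mathcal{D}(Y))) = 1$. Since $g_{*}\mu = f_{*}\mu$ by assumption, this forces $g_{*}\mu(\mathcal{D}(Y)) = 1$, and by definition of push-forward we conclude $g(x) \in \mathcal{D}(Y)$ for $\mu$-almost every $x \in X$.

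Next, since $Y$ is locally compact and separable metric, the map $\iota\colon Y \to \mathscr{P}(Y)$, $y \mapsto \delta_{y}$, is a continuous injection between Polish spaces, hence a Borel isomorphism onto its image $\mathcal{D}(Y)$ (which is therefore a Borel subset of $\mathscr{P}(Y)$). On the full-measure set $E := g^{-1}(\mathcal{D}(Y))$ I would define $S(x) := \iota^{-1}(g(x))$, and extend $S$ to all of $X$ by setting $S(x) := y_{0}$ for some fixed $y_{0} \in Y$ off $E$. Because $g$ is Borel by the preceding proposition characterising disintegration maps, and $\iota^{-1}$ is Borel on $\mathcal{D}(Y)$, the function $S$ is Borel. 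By construction $g(x) = \delta_{S(x)}$ for $\mu$-a.e.\ $x$, and therefore $\eta = \mu \otimes g(x) = \mu \otimes \delta_{S(x)}$.

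To finish, I would verify that $S$ is a genuine transport map: since $\eta \in [\gamma] \subset \Pi(\mu, \nu)$, its second marginal equals $\nu$, and a direct computation on $\eta = \mu \otimes \delta_{S(x)}$ yields $(\text{proj}_{Y})_{*}\eta = S_{*}\mu$, whence $S_{*}\mu = \nu$. The main obstacle I expect is the measurable-selection step: one must justify carefully that $\mathcal{D}(Y)$ is a Borel subset of $\mathscr{P}(Y)$ and that $\iota$ admits a Borel inverse on its image, which is a standard consequence of the Lusin--Souslin theorem but should be cited explicitly. The non-atomicity hypothesis on $\mu$ presumably intervenes here to ensure that the $\mu$-a.e.\ identification $g(x) = \delta_{S(x)}$ unambiguously determines $S$ up to $\mu$-null sets, ruling out the degenerate situation in which an atom of $\mu$ could otherwise support multiple distinct representatives of the disintegration.
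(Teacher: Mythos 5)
Your proof is correct, and it takes a genuinely different and more elementary route than the paper's. The paper invokes Ambrosio's Approximation Theorem to produce a sequence of transport maps $S_n$ with $(S_n)_*\mu=\nu$ and $\mu\otimes\delta_{S_n(x)}\to\eta$ weakly, derives the uniform bound $\int_X|S_n(x)|^2\,d\mu=\int_Y|y|^2\,d\nu$, extracts a weakly convergent subsequence $S_n\rightharpoonup S$, and concludes that $\eta=\mu\otimes\delta_{S(x)}$; it is in the approximation step that the non-atomicity of $\mu$ is used, and the argument implicitly assumes enough linear structure on $Y$ for $|y|^2$ and weak convergence of the maps $S_n$ to make sense. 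You instead observe that $f_*\mu$ is concentrated on the set $\mathcal{D}(Y)$ of Dirac masses, so the hypothesis $g_*\mu=f_*\mu$ forces $g(x)\in\mathcal{D}(Y)$ for $\mu$-almost every $x$, and you then pull $g$ back through the embedding $\iota\colon y\mapsto\delta_y$. This is shorter, avoids the compactness and weak-convergence machinery entirely, works for general $Y$, and in fact never uses non-atomicity: that hypothesis is needed for the paper's approximation argument, not for the statement as you prove it, so your closing speculation about atoms is not quite the right explanation (though it does not affect the proof). Two details worth tightening: $\mathcal{D}(Y)$ is actually closed in $\mathscr{P}(Y)$ under weak convergence (a weak limit of Dirac masses is a Dirac mass, and $\iota^{-1}$ is continuous on $\mathcal{D}(Y)$), which gives the Borel measurability you need more cheaply than Lusin--Souslin; and the identity $(\text{proj}_{Y})_*\eta=\nu$ you use to verify $S_*\mu=\nu$ is exactly the paper's lemma asserting that the disintegration maps fix the second marginal within a transport class, so it is available to you.
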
 
	
		\begin{proof}
			By the Approximation Theorem \cite[Theorem 9.3]{Amb00} and by the definition of equivalent by disintegration, there exist a sequence of Borel functions $S_{n}: X \to Y$, such that, $\eta= \lim\limits_{n \to \infty} \mu \otimes \delta_{S_{n}(x)}$ and $(S_{n})_{*}\mu=\nu$ for every $n \in \mathbb{N}$. So $\mu \otimes \delta_{S_{n}(x)} \in [\gamma]$, for every $n \in \mathbb{N}$. Consider $\varphi(y)=|y|^2$ and observe that
			\begin{equation*}
				\int_{X} |S_{n}(x)|^2 ~d\mu = \int_{X} \varphi (S_{n}(x)) ~d\mu = \int_{Y} \varphi(y) ~d\nu = \int_{Y} |y|^2 ~d\nu < \infty.
			\end{equation*}
			
			Let $\psi \in C((\mathscr{P}(Y), W_{1}))$ be a function given by $\psi(\delta_{y})=|y|^2$, and let $\Delta \subset \mathscr{P}(Y)$ be the set of Dirac measures. Observe that $\psi$ is Lipschitz over $\Delta$, with respect to $W_1$. So, take $\psi$ any Lipschitz extension over $\mathscr{P}(Y)$. Since $(\delta_{S_{n}})_{*}\mu=(\delta_{T})_{*}\mu$, we have that
			\begin{equation*}
				\int_{X}|S_{n}(x)|^2 ~d\mu = \int_{X} \psi(\delta_{S_{n}(x)}) ~d\mu = \int_{X} \psi (\delta_{T(x)}) ~d\mu = \int_{X} |T(x)|^{2} ~d\mu
			\end{equation*}
			for every $n \in \mathbb{N}$. Therefore, moving on to a subsequence, we can assume that $S_{n}$ is weakly convergent to $S$. By \cite[Lemma 9.1]{Amb00} $\mu \otimes \delta_{S_{n}(x)}$ converges weakly to $\mu \otimes \delta_{S(x)}$. This concludes the proof.
		\end{proof}
	
		\
	
		In this context, the Monge problem can be interpreted as: minimize $\int_{X \times Y} c(x, y) ~d \gamma$ in a fixed transport class of $\Pi(\mu, \nu)$, that is, obtain
		\begin{displaymath}
			\min \Big\{ \int_{X \times Y} c(x, y) d\gamma : \gamma \in [\mu \otimes \delta_{T}]  \Big\}
		\end{displaymath} 
		for a given transport map $T$. Regarding the Monge-Kantorovich problem, note that the second part of the proof of Lemma~\ref{maps} applies to general transport plans, so that the second marginal can be fixed by the disintegration maps, as follows.
	
		\
	
		\begin{lemma} \label{kantomaps}
			Consider $\mu \in \mathscr{P}(X)$, the disintegration maps $f: X \to \mathscr{P}(Y)$ and $g: X \to \mathscr{P}(Y)$, and the transport plans $\gamma= \mu \otimes f(x)$ and $\eta=\mu \otimes g(x)$. Then, $f_{*}\mu=g_{*}\mu$ implies ${\text{proj}_{Y}}_{*}\gamma = {\text{proj}_{Y}}_{*} \eta$.
		\end{lemma}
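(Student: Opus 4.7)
The plan is to imitate the second half of the proof of Lemma~\ref{maps}, but without invoking that $f(x)$ and $g(x)$ are Dirac masses. The whole point of that earlier argument was already a statement about general disintegration maps and general transport plans, so here only the direction ``$f_{*}\mu=g_{*}\mu \Rightarrow {\text{proj}_{Y}}_{*}\gamma={\text{proj}_{Y}}_{*}\eta$'' needs to be extracted.

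Concretely, I would test the two second marginals against an arbitrary $\varphi \in C_{b}(Y)$ (it is enough to use bounded continuous functions to determine a Borel measure on a separable metric space). For any such $\varphi$, using that $\gamma=\mu \otimes f(x)$ and the disintegration formula, I would unfold
\begin{equation*}
	\int_{Y} \varphi \, d({\text{proj}_{Y}}_{*}\gamma) = \int_{X\times Y} \varphi(y) \, d\gamma(x,y) = \int_{X} \Big( \int_{Y} \varphi(y) \, df(x)(y) \Big) d\mu(x) = \int_{X} I_{\varphi}(f(x)) \, d\mu(x),
\end{equation*}
where $I_{\varphi}$ is the lower semicontinuous functional on $\mathscr{P}(Y)$ introduced in \eqref{ifi}. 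The same identity holds with $g$ and $\eta$ in place of $f$ and $\gamma$.

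Then the hypothesis $f_{*}\mu=g_{*}\mu$ applied to the function $I_{\varphi}$ on $\mathscr{P}(Y)$ (which is Borel, being lower semicontinuous) gives
\begin{equation*}
	\int_{X} I_{\varphi}(f(x)) \, d\mu = \int_{\mathscr{P}(Y)} I_{\varphi} \, d(f_{*}\mu) = \int_{\mathscr{P}(Y)} I_{\varphi} \, d(g_{*}\mu) = \int_{X} I_{\varphi}(g(x)) \, d\mu,
\end{equation*}
so the two integrals computed above coincide. Since $\varphi \in C_{b}(Y)$ was arbitrary, this forces ${\text{proj}_{Y}}_{*}\gamma={\text{proj}_{Y}}_{*}\eta$.

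There is essentially no obstacle: the only subtle point is justifying Fubini (applied to the disintegration) and justifying that $I_{\varphi}$ is measurable enough to be integrated against $f_{*}\mu$ and $g_{*}\mu$. Both are handled by the remarks already recorded in the paper: the disintegration identity is the defining relation of $\gamma=\mu\otimes f(x)$, and $I_{\varphi}$ is lower semicontinuous because $W_{1}$ metrises the weak* topology on $\mathscr{P}(Y)$, hence Borel. Note that no converse is being claimed here, which matches the fact that the first half of Lemma~\ref{maps} crucially used the Dirac structure of the disintegration maps, something that is no longer available in the present generality.
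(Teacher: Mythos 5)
Your proof is correct and is essentially the argument the paper intends: the paper gives no separate proof of Lemma~\ref{kantomaps}, merely remarking that the second part of the proof of Lemma~\ref{maps} (the chain of equalities through $I_{\varphi}$ and the change-of-variables for $f_{*}\mu$) carries over to general transport plans, which is exactly what you have written out, together with the identification $\int_{X}I_{\varphi}(f(x))\,d\mu=\int_{Y}\varphi\,d({\text{proj}_{Y}}_{*}\gamma)$. Your closing observation that the converse direction is lost precisely because the Dirac structure is unavailable also matches the paper's discussion following the lemma.
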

	
		\
		
		The reciprocal of Lemma \ref{kantomaps}, however, is not true. Consider, for instance, the following transport problem: Three factories, $x_{1}$, $x_{2}$, and $x_{3}$, with the same production, supply 100\% of their products to two stores, $y_{1}$ and $y_{2}$. Suppose store $y_{2}$ has a demand five times greater than store $y_{1}$. Let $\mu$ be the measure related to the production of the factories and $\nu$ be the measure related to the amount of delivered products to the stores. These measures are given by
		\begin{displaymath}
			\mu=\frac{1}{3} \delta_{x_{1}}+\frac{1}{3} \delta_{x_{2}}+\frac{1}{3} \delta_{x_{3}}
		\end{displaymath}
		\begin{displaymath}
			\nu=\frac{1}{6}\delta_{y_{1}}+\frac{5}{6}\delta_{y_{2}}.
		\end{displaymath}
	
		\
	
		Consider the transport plan illustrated in Figure \ref{transporte_x1}, which divides the products which are produced in the factory $x_{1}$ between the two stores. The corresponding disintegration map $f$ is given by $f(x_1)=\frac{1}{2} \delta_{y_{1}} + \frac{1}{2} \delta_{y_{2}}$, $f(x_2)=\delta_{y_{2}}$ and $f(x_3)=\delta_{y_{2}}$.
	
		\begin{figure}[h!] 
			\centering \includegraphics[scale=0.8]{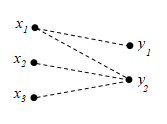}
			\caption{Transport plan 1.} \label{transporte_x1}
		\end{figure}
	
		\
		
		\
		
		\
		
		\

		\noindent Note that, since $\mu$ is of the type $\sum\limits_{i} \alpha_{i} \delta_{x_{i}}$, we have that $f_{*}\mu=  \sum\limits_{i} \alpha_{i} \delta_{f(x_{i})}$. Suppose, however, that there is a change in logistics and in the new transport plan, Figure \ref{transporte_x2}, the factory $x_{1}$ delivers its production only to the store $y_{2}$ and the factory $x_{2}$ divides its production between the two stores. In this new situation, the disintegration map is given by $g(x_1)= \delta_{y_{2}}$, $g(x_2)=\frac{1}{2} \delta_{y_{1}} + \frac{1}{2} \delta_{y_{2}}$ and $g(x_3)=\delta_{y_{2}}$. Nevertheless, on the one hand $f_{*}\mu=g_{*}\mu$, i. e., the transport class does not change. On the other hand, if in this new transport plan the factories  $x_{1}$ and $x_{2}$ deliver to both stores, Figure \ref{transporte_x1x2}, in such a way that $x_{1}$ sends 30 \% of its production to $y_{1}$ and 70\% to $y_{2}$, and $x_{2}$ sends 20 \% of its production to $y_{1}$ and 80\% to $y_{2}$, the disintegration map is given by  $h(x_1) = \frac{3}{10} \delta_{y_{1}} + \frac{7}{10} \delta_{y_{2}}$, $h(x_2) = \frac{2}{10} \delta_{y_{1}} + \frac{8}{10} \delta_{y_{2}}$ and $h(x_3) = \delta_{y_{2}}$. In this case, $f_{*}\mu \neq h_{*}\mu$. So, the transport plans $1$ and $3$ do not belong to the same transport class.
		
		\begin{figure}[h]
			\centering
			\begin{minipage}[c]{6.2cm}	
				\centering
				\includegraphics[scale=0.9]{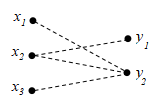}
				\caption{Transport plan 2.} \label{transporte_x2}
			\end{minipage}
			\begin{minipage}[c]{6.2cm}
				\centering \includegraphics[scale=0.8]{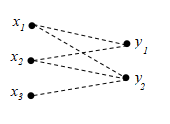}
				\caption{Transport plan 3.}\label{transporte_x1x2}	
			\end{minipage}
		\end{figure}
	
		Furthermore, if there is a small change in transport plan 3, so that $x_{1}$ sends 10\% of its production to $y_{1}$ and 90\% to $y_{2}$, and $x_ {2}$ sends 40\% of its production to $y_{1}$ and 60\% to $y_{2}$, the new disintegration map is given by $k(x_1) = \frac{1}{10} \delta_{y_{1}} + \frac{9}{10} \delta_{y_{2}}$, $k(x_2) = \frac{4}{10} \delta_{y_{1}} + \frac{6}{10} \delta_{y_{2}}$ and $k(x_3) = \delta_{y_{2}}$. Therefore, $h_{*}\mu \neq k_{*}\mu$. Thus, with the previous definition for transport class, the transport class changes when either the number of factories that deliver products to more than one store is changed or even if the fraction of delivered production  is changed.
	
		Therefore, in order to be compatible with the Monge-Kantorovich problem, we need another definition of transport class. Take $\mu \otimes f \in \Pi(\mu, \nu)$ and $\Lambda=f_{*}\mu$, and recall that $(\text{proj}_{Y})_{*}(\mu \otimes f)=\nu$, then for every $\varphi \in C(Y)$,
		\begin{align*}
			\int_{Y}\varphi(y) ~d\nu &=\int_{X} \Big( \int_{Y} \varphi(y) ~df{(x)} \Big) d\mu \\
			&= \int_{X} I_{\varphi}(f{(x)}) ~d\mu \\
			&= \int_{\mathscr{P}(Y)}I_{\varphi}(\lambda) ~d\Lambda(\lambda) \\
			&= \int_{\mathscr{P}(Y)} \Big( \int_{Y} \varphi (y) d\lambda \Big) ~d\Lambda,
		\end{align*}
		where $I_{\varphi}$ is given by equation \eqref{ifi} and $\lambda \in \mathscr{P}(Y)$. Hence, every probability $\Lambda$ in $(\mathscr{P}(Y), W_{1})$ satisfying
		\begin{equation}
			\int_{\mathscr{P}(Y)} \lambda ~d\Lambda=\nu
		\end{equation}
		defines a transport class $[\gamma]=\{\mu \otimes f : f_{*}\mu=\Lambda \}$. As an example, a transport class $[\mu \times \nu]$ corresponds to the measure $\Lambda=\delta_{\nu}$. From this point of view, the Monge-Kantorovich problem in the class $\Lambda$ can be thought as:
		\begin{displaymath}
			MK_{\Lambda}(c, \mu, \nu)=\inf\limits_{\gamma} \Big\{ \int_{X \times Y} c(x, y) ~d\gamma : \gamma = \mu \otimes f, f_{*}\mu=\Lambda \Big\}.
		\end{displaymath}
		This notion of transport class also allows us to see the Monge-Kantorovich problem as an abstract Monge problem between the spaces $X$ and $\mathscr{P}(Y)$, considering the cost
		\begin{displaymath}
			\tilde{c}(x, \lambda)=\int_{Y}c(x,y) ~d\lambda.
		\end{displaymath}
		In fact, for every disintegration map $f: X \to \mathscr{P}(Y)$, such that $f_{*}\mu=\Lambda$,
		\begin{align*}
			\int_{X} \tilde{c}(x, f{(x)}) ~d\mu &= \int_{X} \Big( \int_{Y}c(x,y) df{(x)} \Big) ~d \mu \\
			&= \int_{X \times Y} c(x, y) ~d(\mu \otimes f)
		\end{align*}
		and the problem $MK_{\Lambda}(c, \mu, \nu)$ is equivalent to the Monge problem with parameters $(\tilde{c}, \mu, \Lambda)$. Thus, the disintegration of transport plans introduces another perspective for the optimization problems proposed by Monge and Kantorovich.
	
		\
	
	\section{Absolute continuity from disintegration maps} \label{sec.absolute}

		To connect what we just developed with the geometric properties of the measure spaces and the space of measures, we shall study how paths of measures are given by disintegration maps. Our interest hereafter is to use the $2$-Wasserstein distance.
		
		To construct such paths in $2$-Wasserstein space, it is essential to analyse the weak continuity of the disintegration map.
			In the Wasserstein space, one can study a characterization of convergence. We say that $\{ \mu_k \}$ \textbf{converges weakly} to $\mu$ when
			\begin{displaymath}
				\int \varphi ~\mu_k \longrightarrow \int \varphi ~d\mu
			\end{displaymath}
			for any bounded continuous function $\varphi$. Moreover, Wasserstein distances metrize weak convergence, that is, if $\{ \mu_k \}$ is a sequence in $\mathscr{P}_p (X)$ and $\mu \in \mathscr{P}_p(X)$, then $\mu_k$ converges weakly to $\mu$ is equivalent to $W_p (\mu_k, \mu)\longrightarrow0$. When referring to continuity of the disintegration map, we use \textbf{weak continuity} meaning continuity with respect to weak convergence on $\mathscr{P}_p (X)$.
			
			A map $f$ on an metric space $(Y, \nu)$ is called \textbf{nearly continuous} if, for each $\varepsilon > 0$ there exist $\mathcal{K} \subset Y$ closed with $\nu (Y \backslash \mathcal{K}) < \varepsilon$ such that $f$ restricted to $\mathcal{K}$ is continuous. Let $f$ be a disintegration map of $\mu$ w.r.t. $\nu$. If $\nu$ is absolutely continuous with respect to the volume measure of $Y$, we can apply Lusin's Theorem \ref{thm.Lusin} to show that $f$ is nearly weakly continuous. We also need the following lemma, which is actually a known fact within the Optimal Transport community. For completeness, we add it here in our context.
		
		\
		
		\begin{lemma}\label{lemma.separable}
			Let $(X, d)$ be a separable metric space. The $2$-Wasserstein space $(\mathscr{P}(X), W_{2})$ is a separable metric space.
		\end{lemma}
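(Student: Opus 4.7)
The plan is to exhibit an explicit countable $W_{2}$-dense subset. (Since $W_{2}$ is generally only finite on $\mathscr{P}_{2}(X)$, the statement should be read in that space; this is also the only case in which ``separable metric space'' makes literal sense.) Fix a countable dense sequence $\{x_{n}\}_{n\ge 0}\subset X$, with $x_{0}$ a distinguished base point, and let
\[
\mathcal{D} = \left\{\sum_{i=1}^{N} q_{i}\,\delta_{x_{n_{i}}} : N\in\mathbb{N},\ n_{i}\in\mathbb{N},\ q_{i}\in\mathbb{Q}_{\geq 0},\ \sum_{i=1}^{N} q_{i}=1\right\}.
\]
Countability of $\mathcal{D}$ is immediate, so the task reduces to $W_{2}$-density.

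Given $\mu\in\mathscr{P}_{2}(X)$ and $\varepsilon>0$, I would proceed in two stages. First, using finiteness of $\int d(x,x_{0})^{2}\,d\mu$, choose $R$ so large that $\int_{X\setminus B_{R}(x_{0})} d(x,x_{0})^{2}\,d\mu < \varepsilon^{2}$; cover $B_{R}(x_{0})$ by finitely many balls of radius $<\varepsilon$ centred at dense-subset points $y_{1},\dots,y_{N}$; and refine to a Borel partition $A_{1},\dots,A_{N}$ of $B_{R}(x_{0})$ with $y_{i}\in A_{i}$. Define a Borel map $T\colon X\to X$ by $T|_{A_{i}} = y_{i}$ and $T|_{X\setminus B_{R}(x_{0})} = x_{0}$. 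Then $\tilde\mu := T_{*}\mu$ is a finitely supported measure whose atoms lie in $\{x_{n}\}$, and the transport plan $(\mathrm{id},T)_{*}\mu$ is a coupling of $\mu$ and $\tilde\mu$, giving
\[
W_{2}(\mu,\tilde\mu)^{2} \leq \int d(x,T(x))^{2}\,d\mu \leq (2\varepsilon)^{2} + \varepsilon^{2},
\]
so $W_{2}(\mu,\tilde\mu)$ is of order $\varepsilon$.

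Second, approximate the real weights $\mu(A_{i})$ and $\mu(X\setminus B_{R}(x_{0}))$ by rationals to obtain $\nu\in\mathcal{D}$ supported in the common finite set $S = \{y_{1},\dots,y_{N},x_{0}\}$. A coupling between $\tilde\mu$ and $\nu$ can be built by first matching the shared mass at each atom (cost zero) and then moving the surplus; since $S$ lies in the ball of radius $R+\varepsilon$ about $x_{0}$, this yields a bound of the form $W_{2}(\tilde\mu,\nu)^{2} \leq (2R+2\varepsilon)^{2}\cdot\tfrac{1}{2}\|p-q\|_{\ell^{1}}$, where $p,q$ are the weight vectors of $\tilde\mu,\nu$. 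Choosing rationals with $\|p-q\|_{\ell^{1}}$ small enough makes $W_{2}(\tilde\mu,\nu)<\varepsilon$, and the triangle inequality concludes that $W_{2}(\mu,\nu)$ is of order $\varepsilon$, establishing density.

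The main technical point lies in the rational-reweighting stage: one must construct an explicit coupling between two atomic measures on a common finite support and bound its squared cost by the $\ell^{1}$ discrepancy of weights times the squared diameter of the support. The subtlety is that the diameter $R+\varepsilon$ can be large compared to $\varepsilon$, so one has to fix $R$ first (to control the tail and the partition) and only afterwards pick the rational approximations fine enough that the product of diameter-squared and weight discrepancy is again small — an order-of-quantifiers point that is easy to overlook but is the crux of piecing the two stages together.
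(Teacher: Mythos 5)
Your argument is essentially the paper's proof: discretise $\mu$ onto finitely many dense-subset points after truncating the tail, then rationalise the weights, with your explicit coupling in the second stage playing the role of the paper's appeal to the weighted total-variation bound of Villani. The only point to patch is that a ball $B_{R}(x_{0})$ in a general separable metric space need not be totally bounded, so you should either extract finitely many balls from a countable cover that capture all but a small fraction of the mass (sending the remainder to $x_{0}$ at cost controlled by $R$), or, as the paper does, truncate to a compact set $K$ instead of a ball.
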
		
		
		\begin{proof}
			Let $\mathcal{D} \subset X$ be a countable and dense subset. Consider the space $\mathcal{M}$ defined by $\mathcal{M} := \{ \nu \in \mathscr{P}(X) : \nu = \sum_j a_j \delta_{x_j}, ~\text{with}~ a_j \in \mathbb{Q} ~\text{and}~ x_j \in \mathcal{D} \}.$ We want to show that $\mathcal{M}$ is dense in $(\mathscr{P}(X), W_2)$. 
			
			Given $\mu \in (\mathscr{P}(X), W_{2})$, then for any $\varepsilon > 0$ and $x_0 \in \mathcal{D}$, there exists a compact set $K \subset X$, such that
			\begin{displaymath}
				\int_{X \backslash K} d(x_0, x)^{2} ~d\mu \leq \varepsilon^{2}.
			\end{displaymath}
			Since $K$ is compact, we may cover it with a family $\{ B(x_k, \frac{\varepsilon}{2}):1 \leq k \leq N,$ $ x_k \in \mathcal{D} \}$	and define 
			\begin{displaymath}
				B_{k}':= B(x_k, \varepsilon) \backslash \bigcup_{j<k} B(x_j, \varepsilon)
			\end{displaymath}
			so that $\{ B_{k}' \}$ are disjoint and still cover $K$. Define $\varphi$ on $X$ by 
			\begin{equation*}
				\begin{cases}
					\varphi(B_{k}' \cap K) = \{ x_k \} \\
					\varphi(X-K)=\{ x_0 \}.
				\end{cases}
			\end{equation*}
			So $d(x, x_k) \leq \varepsilon$ for every $x \in K$. Therefore,
			\begin{displaymath}
				\int_{X} d(x, \varphi(x))^{2} ~d\mu \leq \varepsilon^2 \int_{K} d\mu + \int_{X \backslash K} d(x, x_0)^2 ~d\mu  \leq 2 \varepsilon^2
			\end{displaymath}
			and then $W_2 (\mu, \varphi_{*}\mu) \leq 2 \varepsilon^2$. Note that $\varphi_{*}\mu$ can be written as $\sum_{0 \leq j \leq N} \alpha_j \delta_{x_j}$, that is, $\mu$ might be approximated by a finite combination of Dirac masses. Moreover, the coefficients $\alpha_j$ might be replaced by rational coefficients (up to a small error in Wasserstein distance): since Wasserstein distances are controlled by weighted total variation \cite[Theorem 6.15]{Vil09},
			\begin{displaymath}
				W_{2} \Big( \sum_{0 \leq j \leq N} \alpha_j \delta_{x_j}, \sum_{0 \leq j \leq N} \beta_j \delta_{x_j}  \Big) \leq 2^{\frac{1}{2}}   \Big[ \max_{k, l} d(x_k, x_l) \Big] \sum_{0 \leq j \leq N} |\alpha_j - \beta_j|^{\frac{1}{2}},
			\end{displaymath}
			which can become small, taking suitable coefficients $\beta_j \in \mathbb{Q}$. Thus, it follows that $\mathcal{M}$ is dense in $\mathscr{P}(X)$. Consequently, $(\mathscr{P}(X), W_2)$ is separable. 
		\end{proof}

		\ 
		
		\begin{proposition} \label{weakly_continuous}
			Let $X, ~Y$ be locally compact, complete, and separable metric spaces. Consider $\pi: X \to Y$ a Borel map, $\mu \in \mathcal{M}_{+}(X)$, $\vol_Y$ volume measure on $Y$, and  $\nu := \pi_{*}\mu$. If $\nu \ll \vol_Y$, then the disintegration map of $\mu$ w.r.t. $\nu$ is nearly weakly continuous. 			
		\end{proposition}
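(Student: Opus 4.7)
The plan is to apply Lusin's Theorem~\ref{thm.Lusin} to the disintegration map $f$ and then use Lebesgue differentiation, which is available precisely because $\nu$ is absolutely continuous with respect to the volume measure, to promote the resulting restricted continuity on each Lusin set to a pointwise weak-continuity statement at $\nu$-almost every point.

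First I would verify the hypotheses of Theorem~\ref{thm.Lusin} for the disintegration map $f : Y \to (\mathscr{P}(X), W_{2})$. By Theorem~\ref{conditional}, $f$ is Borel; by Lemma~\ref{lemma.separable}, the target $(\mathscr{P}(X), W_{2})$ is a separable metric space. The space $Y$ is locally compact and separable, and $\nu = \pi_{*}\mu$ is a finite Borel (Radon) measure on $Y$ because $\mu \in \mathcal{M}_{+}(X)$. Taking $A = Y$ and applying Theorem~\ref{thm.Lusin} with $\delta = 1/n$, I get, for every $n \in \mathbb{N}$, a closed set $K_{n} \subset Y$ with $\nu(Y \setminus K_{n}) < 1/n$ such that $f|_{K_{n}}$ is continuous. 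Setting $Y_{0} := \bigcup_{n} K_{n}$, one obtains a Borel subset of $Y$ with $\nu(Y \setminus Y_{0}) = 0$.

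Next, I would exploit the assumption $\nu \ll \mathrm{vol}_{Y}$. Lebesgue's differentiation theorem then implies that $\nu$-almost every $y \in K_{n}$ is a volume-density point of $K_{n}$, so that $\nu(B(y,r) \setminus K_{n}) = o(\nu(B(y,r)))$ as $r \downarrow 0$. Combining this with continuity of $f|_{K_{n}}$ at such a $y$ yields, for every $\varepsilon > 0$, that the set $\{ y' \in Y : W_{2}(f(y'), f(y)) < \varepsilon \}$ has $\nu$-density one at $y$. Since $W_{2}$ dominates the weak topology on $\mathscr{P}(X)$, this is exactly the weak-continuity of $f$ at $y$ in the $\nu$-almost sure sense asserted by the proposition, and the exhaustion $Y_{0} = \bigcup_{n} K_{n}$ shows that the set of such points has full $\nu$-measure. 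The main obstacle is the final upgrade step: continuity of $f|_{K_{n}}$ at a point $y$ only controls approach sequences lying inside $K_{n}$, whereas weak continuity of $f$ at $y$ concerns arbitrary approach sequences; it is precisely the density-point structure supplied by $\nu \ll \mathrm{vol}_{Y}$ that allows one to discard the negligible-density portion of any approach sequence and recover the weak limit from the bulk lying inside $K_{n}$.
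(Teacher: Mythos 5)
Your proposal starts exactly as the paper does: verify that $f$ is Borel (Theorem~\ref{conditional}), that $(\mathscr{P}(X), W_{2})$ is a separable metric space (Lemma~\ref{lemma.separable}) on which $W_{2}$ metrises weak convergence, and then apply Lusin's Theorem~\ref{thm.Lusin} to obtain closed sets $K_{n}$ with $\nu(Y\setminus K_{n})<1/n$ on which $f$ is continuous. Up to that point you match the paper's argument, and you are in fact more explicit than the paper about where the hypothesis $\nu\ll\mathrm{vol}_{Y}$ is supposed to enter: the paper stops at the Lusin step (its closing sentence about generated $\sigma$-algebras does not use absolute continuity at all), whereas you bring in Lebesgue differentiation to extract density points of each $K_{n}$.

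The gap is in your final upgrade. Showing that $\{y' : W_{2}(f(y'),f(y))<\varepsilon\}$ has $\nu$-density one at $y$ is \emph{approximate} continuity, not continuity: a sequence $y_{n}\to y$ can lie entirely inside the complementary density-zero set, and density statements constrain sets of positive measure, not individual approach sequences, so there is nothing to ``discard''. Concretely, take $Y=[0,1]$ with $\nu$ Lebesgue, $C\subset[0,1]$ a fat Cantor set, $X=[0,1]\times\{a,b\}$, $\pi$ the projection, and $\mu$ chosen so that $\mu_{y}=\delta_{(y,a)}$ for $y\in C$ and $\mu_{y}=\delta_{(y,b)}$ otherwise; then every neighbourhood of a density point $y$ of $C$ contains points $y'$ with $W_{2}(\mu_{y},\mu_{y'})$ bounded below, so $f$ (and any $\nu$-a.e.\ modification of it) fails to be sequentially weakly continuous on a set of positive $\nu$-measure, even though $\nu\ll\mathrm{vol}_{Y}$. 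What your argument genuinely delivers — and this is also all the paper's own proof delivers — is continuity of $f$ restricted to a full-measure set $Y_{0}=\bigcup_{n}K_{n}$, equivalently $\nu$-approximate weak continuity at $\nu$-a.e.\ point; the literal reading ``continuous at $y$ along arbitrary sequences in $Y$, for $\nu$-a.e.\ $y$'' is not attainable by this route and is in fact false in general. So the defect is not in your choice of method but in the last sentence's claim that the density-point structure recovers the limit along arbitrary approach sequences.
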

		
		\begin{proof}
			Consider the $2$-Wasserstein distance $W_{2}$ on $\mathscr{P}(X)$, with $d$ a complete bounded metric for $X$. $W_{2}$ metrises the weak convergence of $\mathscr{P}(X)$ \cite[Theorem 6.9]{Vil09}. Furthermore, $(\mathscr{P}(X), W_{2})$ is a separable space. Moreover, from Theorem~\ref{conditional}, the map $y  \mapsto \mu_{y} \in \mathscr{P}(X)$ is measurable, and $\nu$ is a Borel measure on $Y$, since $\nu \ll \vol_Y$. Then, by Lusin's Theorem for $Y$ and $\varepsilon > 0$, there exists $\mathcal{K} \subset Y$ with $\nu(Y\backslash \mathcal{K}) < \varepsilon$ such that the disintegration map restricted to  $\mathcal{K}$ is weakly continuous. 
		\end{proof}
		
		\
		
		Although Proposition \ref{weakly_continuous} is relevant by itself, we would like to have conditions for which the disintegration map is weakly continuous at every point, so that, given any two points $y, y'$ in $Y$, we can construct a path of conditional measures connecting $\mu_{y}$ and $\mu_{y'}$. Note that the 2-Wasserstein space is actually connected, and therefore we can always find a path connecting two measures. Nevertheless, we require this path to be specifically given by the disintegration map. This is not trivial indeed, and we can easily construct examples in which this map is not weakly continuous.
		
		\
		
		\begin{example}
			Consider $X = Y = [0,1]$. Let $\mu$ be the Lebesgue measure on $X$ and take the map $\pi: X \to Y$ given by
			\begin{equation*}
				\pi(x) =
				\begin{cases}
					2x, & \text{if~~} x< \frac{1}{2} \\
					1,  & \text{if~~} x \geq \frac{1}{2}.
				\end{cases}
			\end{equation*}
			Note that $\pi$ is Borel measurable, since it is continuous. Define
			\begin{displaymath}
				\nu := \pi_{*} \mu = \frac{1}{2} \lambda + \frac{1}{2} \delta_1
			\end{displaymath}
			where $\lambda$ is the Lebesgue measure on $Y$ and $\delta_1$ is the Dirac measure at $y=1$. A disintegration $\{ \mu_y \}_{y \in Y}$ of $\mu$ with respect to $\pi$ is given by
			\begin{equation*}
				\mu_y =
				\begin{cases}
					\delta_{\pi^{-1}(y)}, & \text{if~~} y<1 \\
					2\lambda|_{\left[\frac{1}{2}, 1\right]}, & \text{if~~} y=1.
				\end{cases}
			\end{equation*}
			In this case, the disintegration map is not weakly continuous at $y=1$. In fact, consider a sequence $(y_n)_n$ in $Y$ such that $y_n \longrightarrow y=1$. Note that
			\begin{align*}
				W_2^2(\mu_{y_n}, \mu_y) =& \inf_{\gamma \in \Pi(\mu_{y_n}, \mu_y)} \int_{X \times X} d(x_1, x_2)^2 ~d \gamma \\
				=& \int_{X \times X} d(x_1, x_2)^2 ~d (\delta_{\pi^{-1}(y_n)} \times \mu_y) \\
				=& \int_{X} d(\pi^{-1}(y_n), x_2)^2 ~d\mu_{y}.
			\end{align*}
			Then, at the limit $y_n \longrightarrow 1$, we have
			\begin{displaymath}
				W_2^2(\mu_{y_n}, \mu_y) \longrightarrow \int_{X} d \Big(\frac{1}{2}, x_2 \Big)^2 ~d\mu_{y} \neq 0.
			\end{displaymath}
			Therefore, $f$ is not weakly continuous at $y=1$. Furthermore, note that $\nu(\{1\}) = \frac{1}{2}$ and $\nu$ is not absolutely continuous with respect to $\lambda$. However, considering $Y=[0, 1)$ and guaranteeing the absolute continuity of $\nu$, we can obtain a good approximation $\mathcal{K}$ in which $f$ is weakly continuous.
		\end{example}
		
		\
		
		We can actually ask for some additional hypotheses so that the disintegration map is weakly continuous. One possibility is to take $\pi$ as a bijective and continuous map.
		
		\
		
		\begin{proposition} \label{bijective}
			Let $X$ and $Y$ be locally compact and separable metric spaces. Consider $\mu \in \mathcal{M}_{+}(X)$, $\pi: X \to Y$ a Borel map, $\nu:= \pi_*\mu$, $\{ \mu_{y} \}_{y \in Y}$ a disintegration of $\mu$ given by Theorem \ref{conditional}, and $f$ the disintegration map of $\mu$ w.r.t. $\nu$. If $\pi$ is bijective and continuous, then $f$ is weakly continuous.  
		\end{proposition}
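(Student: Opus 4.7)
The plan is first to reduce the problem to a statement about Dirac masses. By Theorem~\ref{conditional}, the conditional measure $\mu_y$ is a probability measure concentrated on the fibre $\pi^{-1}(y)$, and since $\pi$ is bijective this fibre is a single point $x_y := \pi^{-1}(y)$. Hence $\mu_y = \delta_{x_y}$ for every $y \in Y$, and the disintegration map takes the explicit form $f(y) = \delta_{\pi^{-1}(y)}$. This replaces the abstract weak-continuity question with a concrete one about the map $y \mapsto \pi^{-1}(y)$.

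Second, I would reduce weak continuity of $f$ to continuity of the set-theoretic inverse $\pi^{-1} : Y \to X$. Since we endow $\mathscr{P}(X)$ with $W_2$ for a bounded complete metric on $X$, Lemma~\ref{lemma.separable} together with \cite[Theorem~6.9]{Vil09} tells us that $W_2$-convergence coincides with weak convergence. For Dirac masses, testing against bounded continuous $\varphi \in C_b(X)$ gives $\int \varphi \, d\delta_{x_n} = \varphi(x_n)$, so $\delta_{x_n} \to \delta_x$ weakly if and only if $x_n \to x$ in $X$. Thus weak continuity of $f$ at $y$ is equivalent to continuity of $\pi^{-1}$ at $y$.

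Third, I would establish continuity of $\pi^{-1}$ via a subsequence argument exploiting local compactness. Fix $y_n \to y$ in $Y$ and set $x_n := \pi^{-1}(y_n)$, $x := \pi^{-1}(y)$. Choose a compact neighbourhood $K \subset X$ of $x$. Because $\pi$ is continuous and bijective and $K$ is compact, $\pi(K)$ is compact in $Y$ and contains $y$ in its interior (this is the step where one uses the hypothesis of the proposition, invoking that $\pi$ is a continuous bijection between locally compact spaces to conclude $\pi(K)$ is a neighbourhood of $y$). Hence $y_n \in \pi(K)$ for $n$ large, so $x_n \in K$ eventually. Any subsequence $x_{n_k}$ then has a further convergent sub-subsequence $x_{n_{k_j}} \to x'$ in $K$; continuity of $\pi$ yields $\pi(x') = \lim \pi(x_{n_{k_j}}) = y = \pi(x)$, and bijectivity gives $x' = x$. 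Since every subsequence admits a sub-subsequence converging to the same limit $x$, the full sequence converges: $x_n \to x$.

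The delicate step is the claim that $\pi(K)$ is a neighbourhood of $y$, since continuous bijections between locally compact separable metric spaces need not be open (consider $t \mapsto e^{it}$ from $[0,2\pi)$ onto $S^1$). This is where the hypotheses of the proposition have to be used carefully — either via invariance-of-domain-style reasoning, by restricting the weak-continuity conclusion to $\nu$-a.e.\ $y \in Y$ (so that such pathological boundary points are negligible and Proposition~\ref{weakly_continuous} can be combined with the Dirac-mass description above), or by augmenting bijectivity and continuity of $\pi$ with a local properness property that guarantees $\pi$ is open. I would flag this point as the principal obstacle, and expect the write-up to resolve it by one of those routes.
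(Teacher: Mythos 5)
Your first two reductions coincide with the paper's proof: the paper likewise shows $\mu_y=\delta_{\pi^{-1}(y)}$ (by comparing $\int_Y \mu_y(B\cap\pi^{-1}(y))\,d\nu$ with $\int_Y \delta_{\pi^{-1}(y)}(B\cap\pi^{-1}(y))\,d\nu$ and using that $B\cap\pi^{-1}(y)$ is a singleton or empty), and then reduces weak continuity of $f$ to continuity of $y\mapsto\pi^{-1}(y)$. The divergence is at the final step, and it is exactly the point you flag: the paper disposes of it with the bare assertion ``since $\pi$ is continuous, $\pi^{-1}$ is continuous,'' which does not follow from the stated hypotheses. Your own example $t\mapsto e^{it}$ from $[0,2\pi)$ onto $S^1$ is not merely an obstacle to one proof strategy --- it is a counterexample to the proposition as stated. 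Take $\mu$ to be normalised Lebesgue measure on $X=[0,2\pi)$ (a finite positive Radon measure on a locally compact separable metric space), $Y=S^1$, $\pi(t)=e^{it}$; then $\mu_y=\delta_{\pi^{-1}(y)}$, and for $y_n=e^{-i/n}\to 1$ one has $\pi^{-1}(y_n)=2\pi-\tfrac1n$, which does not converge to $\pi^{-1}(1)=0$ in $X$, so $f$ is not weakly continuous at $y=1$. Hence no argument from these hypotheses alone can close the gap.

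Your proposed repair via local compactness founders precisely where you say it does: $\pi(K)$ need not be a neighbourhood of $y$, because a continuous bijection between locally compact separable metric spaces need not be open. The subsequence argument you sketch does become a complete (and more careful) proof than the paper's once one adds a hypothesis guaranteeing openness of $\pi$ --- e.g.\ that $\pi$ is proper, or that $X$ is compact (a continuous bijection from a compact space onto a Hausdorff space is a homeomorphism), or simply that $\pi$ is a homeomorphism. Alternatively, one can retreat to the $\nu$-almost-everywhere conclusion of Proposition~\ref{weakly_continuous} combined with your Dirac-mass identification. So the verdict is: you have correctly diagnosed a genuine gap, but it is a gap in the statement and in the paper's own proof rather than a deficiency of your approach relative to the paper's.
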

		
		\begin{proof}
			On the one hand,
			\begin{align*}
				\mu(B) =& \int_{Y} \mu_y (B) ~d\nu \\
				=& \int_{Y} \mu_y (B \cap \pi^{-1}(y)) ~d\nu.
			\end{align*}
			On the other hand,
			\begin{align*}
				\mu (B) =& \int_{X} \mathbbm{1}_{B} ~d\mu \\
				=& \int_{X} \mathbbm{1}_{B} ~d(\nu \circ \pi) \\
				=& \int_{Y} \mathbbm{1}_B (\pi^{-1}(y)) ~d\nu \\
				=& \int_{Y} \mathbbm{1}_B (\pi^{-1}(y)) \delta_{\pi^{-1}(y)} (\pi^{-1}(y)) ~d\nu \\
				=& \int_{Y} \delta_{\pi^{-1}(y)} (B \cap \pi^{-1}(y)) ~d\nu.
			\end{align*}
			Then,
			\begin{equation*}
				\int_{Y} \mu_y (B \cap \pi^{-1}(y)) ~d\nu = \int_{Y} \delta_{\pi^{-1}(y)} (B \cap \pi^{-1}(y)) ~d\nu.
			\end{equation*}
			Moreover, since $\mu_y$ is a probability and $B \cap \pi^{-1}(y)$ is either a singleton or a empty set, we have that 
			\begin{equation*}
				\delta_{\pi^{-1}(y)} (B \cap \pi^{-1}(y)) \geq \mu_y (B \cap \pi^{-1}(y)).
			\end{equation*} 
			Then $\mu_y (B \cap \pi^{-1}(y)) = \delta_{\pi^{-1}(y)} (B \cap \pi^{-1}(y))$, and it follows that $\mu_y = \delta_{\pi^{-1}(y)}$.
			
			\
			
			\noindent Suppose $y_n \longrightarrow y$. Since $\pi$ is continuous, $\pi^{-1}$ is continuous, and then
			\begin{equation*}
				\int g ~d\mu_{y_n} \longrightarrow \int g ~d\mu_y
			\end{equation*}
			for every bounded uniformly continuous function $g$. Therefore, $f$ is weakly continuous.
		\end{proof}
		
		\
		
		Asking for the bijectivity of $\pi$ is quite strong. We want to explore ways to ease this restriction and to obtain continuity at least for almost every point. In some examples which $\pi$ is a quotient map, we have the weak continuity of $f$.
		
		\
		
		\begin{example} \label{ex_prod}
			$\ell_q$-product space from metric measures spaces.
			
			\noindent Let $(Y, d_Y, m_Y)$, $(Z, d_Z, m_Z)$ denote the metric spaces $(Y, d_Y)$ and $(Z, d_Z)$ which are endowed with probability measures $m_Y$ and $m_Z$, respectively. We shall call these metric measure spaces. For $q \in [1, \infty]$, define the $\ell_q$-product space $X := Y \times_{\ell_q} Z$ as the product space $Y \times Z$ equipped with the measure $\mu = m_Y \times m_Z$ and the distance $d_{\ell_q}$ given by
			\begin{equation*}
				d_{\ell_q} ((y, z), (y' z')) =
				\begin{cases}
					[ d_Y (y, y')^q + d_Z (z, z')^q]^{\frac{1}{q}}, & \text{if ~~} 1 \leq q < \infty \\
					\max \{ d_Y(y, y') , d_Z (z, z') \}, & \text{if ~~} q=\infty.
				\end{cases}
			\end{equation*}
			Consider the projection $\pi: X \to Y$ and $\{ \mu_{y} \}_{y \in Y}$ the disintegration of $\mu$ with respect to $\nu:= \pi_* \mu$. Note that
			\begin{displaymath}
				d_{\ell_q} (\pi^{-1}(y), \pi^{-1}(y')) = d_Y (y, y')
			\end{displaymath}
			for every $q \in [1, \infty]$ and, for every conditional measures $\mu_{y}, \mu_{y'}$ 
			\begin{align*}
				W_2^2(\mu_{y}, \mu_{y'}) =& \inf_{\gamma \in \Pi (\mu_{y}, \mu_{y'})} ~ \int d_{\ell_q} ((y_1, z_1), (y_2, z_2))^2 ~ d\gamma \\
				=&  \inf_{\gamma \in \Pi(\delta_{y}, \delta_{y'})} \int d_Y(y_1, y_2)^2 ~d\gamma \\
				=& ~ d_{Y}(y, y')^2.
			\end{align*}  
			Then $W_2 (\mu_{y}, \mu_{y'}) = d_{Y}(y, y')$ and, therefore, the disintegration map is weakly continuous.
		\end{example}
		
		\
		
		In fact, this is one example of a disintegration of measures associated with a foliation, called \textbf{metric measure foliation}, covered in \cite{GKMS18}. For a precise definition, we need to introduce some concepts. Let $(X, d)$ be a metric space. A \textbf{foliation} $\mathcal{F}$ of $X$ is a partition of $X$ into closed subsets. The elements of this partition are called \textbf{leaves}. $\mathcal{F}$ is called a  \textbf{metric foliation} if for every $F, F' \in \mathcal{F}$ and every $x \in F$,
		\begin{displaymath}
			d(F, F') = d(x, F'),
		\end{displaymath}
		where $d(F, F') = \inf \{ d(x, x') : x \in F, x' \in F' \}$ and $d(x, F') = d (\{ x \}, F')$. In case that each leaf is bounded, we say that $\mathcal{F}$ is bounded. Given a metric foliation $\mathcal{F}$ of $X$, define the equivalence relation:
		 \begin{equation} \label{rel_quoti}
		 	 x \sim x' ~\iff~ \exists~ F \in \mathcal{F} ~\text{such that}~ x, x' \in F.
		 \end{equation}
		  Consider the $X^{*} :=X/\sim$ the set of equivalence classes under \eqref{rel_quoti} and the projection  $p: X \to X^{*}$ onto $X^{*}$. We call $X^{*}$ the \textbf{quotient space} and $p$ the \textbf{quotient map}. Define a distance function $d^{*}$ on $X^{*}$ as 
		  \begin{equation}
		  	d^{*}(y, y') := d(p^{-1}(y), p^{-1}(y'))
		  \end{equation}
		  for $y, y' \in X^{*}$. Note that $p$ is a submetry: $p(B(x, r)) = B(p(x), r)$, where $B(x, r)$ is a ball centred at $x$ with radius $r$. In fact, 
		  \begin{align*}
		  	B(p(x), r) =& ~ \{ y \in X^{ *} : d^{*} (y, p(x)) < r\} \\
		  			   =& ~\{ y \in X^{ *} : d (p^{-1} (y), p^{-1}(p(x))) < r\} \\
		  			   =& ~p(B(x, r)).
		  \end{align*}
		Therefore, $p$ is 1-Lipschitz. In this notation, we define:
		
		\
		
		\begin{definition} \label{def_mmf}
			Let $\mathcal{F}$ be a metric foliation of $(X, d, \mu)$. $\mathcal{F}$ is a \textbf{metric measure foliation} if $p_*\mu$ is locally finite Borel measure on $X^*$, and there exists a Borel subset $\Omega \subset X^*$ with $p_*\mu (X^* \backslash \Omega) =0$ such that
			\begin{equation}\label{wass_dis_equal}
				W_2(\mu_y, \mu_{y'}) = d^* (y, y')
			\end{equation}
			for any $y, y' \in \Omega$, where $\{ \mu_y \}_{y \in Y}$ is a disintegration of $\mu$ with respect to $p_{*}\mu$.
		\end{definition}
		
		\
		
		Note that, in this case, the disintegration map is an isometry. This is the most important characteristic of metric measure foliation for us. A very important example of metric measure foliation is related to the action of isometry group.
		
		\
		
		\begin{example} \label{ex_group}
			Let $(X, d, \mu)$ be a metric measure space and $G$ a compact topological group. Let
			\begin{displaymath}
				G \times X \ni (g, x) \mapsto gx \in X
			\end{displaymath}
			be an isometric action of $G$ on $X$. Suppose this action is metric measure isomorphic, that is, for every $g \in G$ the map
			$X \ni x \mapsto gx \in X$ is an isometry preserving the measure $\mu$. Consider $[x]$ the $G$-orbit of a point $x \in X$ and the quotient space $X/G$ endowed with the distance
			\begin{displaymath}
				d_{X/G}([x], [x']) = \inf_{g, g' \in G} d(gx, g'x') .
			\end{displaymath}
			Consider $p: X \to X/G$ the projection map, that is, $p$ is given by $x \mapsto [x]$. The family $\mathcal{F} := \{ p^{-1}(y) : y \in X/G \}$	is a metric measure foliation on $X$. 
		\end{example}
		
		\
		
		Other interesting examples arise from Riemannian submersions of weighted Riemannian manifolds \cite{GKMS18}.
		
		From the Definition \ref{def_mmf} we have a direct association of the $2$-Wasserstein distance between conditional measures (given by Theorem \ref{conditional}) and the distance between points to which these measures were indexed. It is clear that in this case we have the weak continuity of the disintegration map, since it is a isometry. Note that we can relate the $2$-Wasserstein distance between conditional measures to the distance between two points of the quotient space looking at a direction perpendicular to the leaves, through the leaves. Roughly speaking, we can think of a kind of space fibration, where one of the directions has been ``collapsed", and it becomes a parameter for the disintegration family, so that each conditional measure is supported on the underlying fibre (see \figref{mmf_fibre}). That is, the measures $\mu_y$ are of the type $\delta_{y} \times \lambda$, where $\lambda$ is a measure on the fibre, similar to Example \ref{example1}. 
		
		\begin{figure}[h] 
			\includegraphics[scale=0.5]{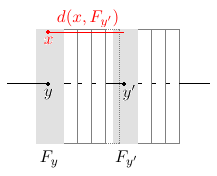} 
			\caption{Idea of a metric measure foliation as a space fibration.}
			\label{mmf_fibre}
		\end{figure}
	
		\
	
		With what we have seen so far we have been able to classify some situation in which the disintegration map is weakly continuous:
		
		\
		\begin{proposition} \label{mmf}
			Let $X$ be locally compact and separable metric space. Consider $\mu \in \mathcal{M}_{+}(X)$, a metric foliation $\mathcal{F}$ of $X$, the quotient space $X^*$, and the quotient map $p:X \to X^*$. If there exists a metric measure foliation of $X$, with $\Omega = X^*$, then the disintegration map of $\mu$ w.r.t. $\nu=p_{*}\mu$ is weakly continuous.
		\end{proposition}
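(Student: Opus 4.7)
The plan is to exploit the defining identity of a metric measure foliation, together with the hypothesis $\Omega = X^*$, to show that $f$ is actually an isometric embedding of $(X^*, d^*)$ into $(\mathscr{P}(X), W_2)$, and then to pass from $W_2$-convergence to weak convergence by invoking the standard fact from Wasserstein theory already used in Proposition~\ref{weakly_continuous}.

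First I would observe that, since by hypothesis $\Omega = X^*$, the identity \eqref{wass_dis_equal} from Definition~\ref{def_mmf} upgrades to
\[
W_2(\mu_y, \mu_{y'}) \;=\; d^*(y, y') \;=\; d\bigl(p^{-1}(y), p^{-1}(y')\bigr)
\]
for \emph{every} pair $y, y' \in X^*$, not merely for $p_*\mu$-almost every pair. This is exactly the statement that the disintegration map $f : (X^*, d^*) \to (\mathscr{P}(X), W_2)$, $y \mapsto \mu_y$, is an isometry onto its image; in particular it is $1$-Lipschitz with respect to $W_2$, hence continuous in $W_2$.

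Next I would invoke the comparison between $W_2$ and the weak topology, exactly as was done in the proof of Proposition~\ref{weakly_continuous}: endowing $X$ with a complete bounded metric compatible with its topology, $W_2$ metrises weak convergence on $\mathscr{P}(X)$ by \cite[Theorem 6.9]{Vil09}. Consequently, if $y_n \to y$ in $(X^*, d^*)$, the isometry property yields $W_2(\mu_{y_n}, \mu_y) = d^*(y_n, y) \to 0$, which in turn gives weak convergence $\mu_{y_n} \to \mu_y$ in $\mathscr{P}(X)$. This establishes weak continuity of $f$ at every point of $X^*$.

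I do not foresee any substantive obstacle in this argument, since all of the geometric work is already encoded in the definition of metric measure foliation. The only point worth checking carefully is that the assumption $\Omega = X^*$ genuinely promotes the $p_*\mu$-a.e. identity of \eqref{wass_dis_equal} into a pointwise identity, so that the Lipschitz (indeed isometric) property of $f$ holds on all of $X^*$ rather than only on a full-measure subset, which is needed in order to conclude continuity, and not merely continuity almost everywhere. This is immediate from Definition~\ref{def_mmf}.
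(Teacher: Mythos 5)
Your argument is correct and is essentially the paper's own proof: both use the hypothesis $\Omega = X^*$ to make the identity $W_2(\mu_y,\mu_{y'}) = d^*(y,y')$ of Definition~\ref{def_mmf} hold pointwise, deduce $W_2(\mu_{y_n},\mu_y) = d^*(y_n,y) \to 0$ for any convergent sequence $y_n \to y$, and conclude weak continuity via the metrisation of weak convergence by $W_2$. Your additional remark that $f$ is in fact an isometric embedding is a harmless (and accurate) elaboration of the same step.
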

		
		\begin{proof}
			Given a metric measure foliation of $X$, we have for every $y,~ y' \in X^*$, \\
			$W_2(\mu_y, \mu_{y'}) = d^* (y, y')$, where $\mu_{y} = f(y)$ and $\mu_{y'} = f(y')$, and the disintegration map is denoted by $f$. Consider a sequence $(y_n)_n$ in $X^*$ such that $y_n \longrightarrow y$. Note that $W_2(\mu_{y_n}, \mu_{y}) \longrightarrow 0$, since $d^*(y_n, y) \longrightarrow 0$. Therefore we have the weak continuity of $f$.
			
		\end{proof}
		
		\
		
		\begin{remark}
			If in Proposition \ref{mmf} we do not ask for $\Omega = X^*$, the weak continuity of the disintegration map is given for $p_*\mu$-almost every $y \in X^*$ due the definition of metric measure foliation. Although it seems to be a strong condition, in general cases of interest we have it satisfied, as in Examples \ref{ex_prod} and \ref{ex_group}, for instance.
		\end{remark}
	
		\
				
			\begin{remark}
				One can associate hypotheses about the map $\pi$ and the type of disintegration obtained. In light of what we have seen, we may obtain:
				
				\
				
				\begin{enumerate}					
					\item Under the hypotheses of Proposition \ref{bijective}, that is, when the map $\pi$ is bijective and continuous, the conditional measures given by Theorem \ref{conditional} are Dirac deltas.
					
					\item Under the hypotheses of Proposition \ref{mmf}, that is, in the metric measure foliation case, the conditional measures given by Theorem \ref{conditional} are of the type $\delta_{y} \times \lambda$, where $\lambda$ is a measure on the fibre.
				\end{enumerate}
			\end{remark}
		
		\
		
		The entire study carried out on the disintegration map makes clear a fundamental condition for it to be weakly continuous: the supports of the conditional measures $\{ \mu_y \}$ must be disjoint. However, if we want some kind of absolute continuity of $\{ \mu_y \}$ with respect to a reference measure, the supports must have $\mu$-positive measure. In the cases of propositions \ref{bijective} and \ref{mmf}, we do not obtain supports with a $\mu$-positive measure. Another way is to think about the absolute continuity of measures with respect to a reference measure on the fibres. This discussion is summarised in Theorem \ref{thm.absolute}. In the statement, we call $f$ is \textbf{minimizing invariant}, if it maps a minimizing curves on $Y$ to a minimizing curves on $(\mathscr{P}(X), W_2)$. Such a condition is fulfilled when $\pi$ is a Riemannian submersion, for example.

		\
		
		\begin{maintheorem} \label{thm.absolute}
			Let $X$ and $Y$ be locally compact, complete, separable metric spaces. Consider $\pi: X \to Y$ a Borel map, $\mu$ in $\mathcal{M}_{+}(X)$ and  $\nu := \pi_{*}\mu$. If the disintegration map of $\mu$ w.r.t. $\nu$ is weakly continuous and $Y$ is path connected, then given two points $y, y' \in Y$:
			
			\
			
			\noindent (i) there exists a path on $(\mathscr{P}(X), W_{2})$, given by the disintegration map, connecting $\mu_{y}$ and $\mu_{y'}$, the respective conditional measures given by Theorem~\ref{conditional};
			
			\
			
			\noindent (ii) if $X$ is a smooth compact Riemannian manifold equipped with a volume measure $\vol$, $\mu \ll \vol$, $\pi$ is such that $\pi^{-1}(y)$ has $\mu$-positive measure for $\nu$-almost every $y$, the disintegration map is minimizing invariant, and either $\mu_{y}$ or $\mu_{y'}$ is absolutely continuous w.r.t. $\vol$, then all the measures $\mu_{y_{t}}$ on the path given by item (i) are absolutely continuous w.r.t. $\vol$;
			
			\
			
			\noindent (iii) if $\pi$ is such that $\{ \pi^{-1}(y) \}_{y \in Y}$ is a metric measure foliation of $X$, $X$ a smooth compact Riemannian manifold, there exists a path given by the disintegration map connecting $\mu_{y}$ and $\mu_{y'}$, and if either $\mu_{y}$ or $\mu_{y'}$ is absolutely continuous with respect to the volume measure on the respective support fibre, then all the measures $\mu_{y_{t}}$ on this path are absolutely continuous with respect to the volume measure on the fibre.
		\end{maintheorem}
		
		\
		
		\begin{proof}
			
			\
			
			\
			
			\noindent \textit{(i)} This is a direct consequence of the weak continuity of the disintegration map, which will be denoted by $f$ throughout the demonstration. Consider $y, y' \in Y$. Let $\psi$ be a continuous curve in $Y$ connecting $y$ and $y'$, that is, $\psi = \{ y_{t} : t \in [0, 1], ~\psi(0)=y, ~\psi(1)=y' \}$. Taking $y_t \longrightarrow \bar{y} \in Y$, we have $f(y_t) \stackrel{w}{\longrightarrow} f(\bar{y})$, that is, $W_2(\mu_{y_{t}}, \mu_{\bar{y}}) \longrightarrow 0$. Then, $\zeta = \{ \mu_{y_{t}} : t \in [0, 1] \}$, where $\mu_{y_{t}}$ is the conditional measure associated with $y_t$ via $f$, for every $t \in [0, 1]$, is a weakly continuous curve in $(\mathscr{P}(X), W_2)$ connecting $\mu_{y}$ and $\mu_{y'}$. This proves the first part of our theorem.
		
			\
			
			\
			
			\noindent The proof of \textit{(ii)} will be done in a few steps. The idea is to use a sort of ``time-dependent" version of optimal transport. See \cite[Chapter 7]{Vil09}, for example. In short, we will consider the curve $\zeta$ given by the disintegration map as an interpolation between probability measures, called \textbf{displacement interpolation}. To this end, we consider a transport problem, and we associate $\zeta$ with a random curve $\xi$ in $X$.

			\
			
			\noindent \textbf{Step 1:} \textit{A minimising curve in the space of measures.}
			
			\
			
			\noindent Consider the path $\zeta = \{ \mu_{y_{t}} : t \in [0, 1] \}$ as constructed in item \textit{(i)}. Taking $\psi$ as a minimising curve in $Y$, $\zeta$ is a minimising curve in $\mathscr{P}(X)$, since we since $f$ is minimising invariant. By abuse of notation we use the weak continuous path $\zeta: [0,1] \to (\mathscr{P}(X), W_{2})$ while referring to this path in $(\mathscr{P}(X), W_{2})$ given by the disintegration map $f$ evaluated at the minimal curve $\psi$ joining $[y, y']$ in $Y$. More precisely, we consider the composition $f \circ \psi$ to describe $\zeta$.

			\
			
			\noindent \textbf{Step 2:} \textit{A random curve in $X$.}
			
			\
			
			\noindent We want to associate $\zeta$ with a random curve $\xi: [0,1] \to X$. In order to set some notation, let $e_t$ be the \textbf{evaluation map}, given by $e_{t}(\xi) = \xi(t) := \xi_{t}$, meaning the evaluation of $\xi$ at $t$. We will also use some usual concepts related to geodesics in Riemannian manifolds throughout the demonstration. We suggest \cite{Jost11} for a comprehensive reading.

			\
			
			\noindent Consider the curve $\zeta: [0,1] \to \mathscr{P}_2(X)$ joining $\mu_{0}$ and $\mu_{1}$ (from step 1), denoting $\mu_0 = \mu_y$ and $\mu_1 = \mu_{y'}$. We also denote $\mu_{t} = \mu_{y_t}$. Suppose that there is a transport problem associated with this curve, whose respective spatial distributions are modeled by these probability measures. Assume that the cost function for the transport between the initial point $x_0 \in X$ (at time $0$) and the final point $x_1 \in X$ (at time $1$), denoted by $c^{0, 1} (x_0, x_1)$, is associated with a family of functionals parameterized by the initial and the final times. Denote by $\mathcal{A}^{0, 1}$ the functional on the set of curves $[0, 1] \to X$, such that,
			\begin{displaymath}
				c^{0, 1} (x_0, x_1)= \inf \{ \mathcal{A}^{0, 1} (\xi) : \xi_{0} = x_0,~\xi_{1} = x_1, ~\xi \in \mathcal{C}([0, 1]; X) \}.
			\end{displaymath} 
			In other words, $c^{0, 1}(x_0, x_1)$ is the minimal cost needed to go from point $x_0$ at initial time $0$, to point $x_1$ at final time $1$. Moreover, let $C^{0,1}(\mu_{0}, \mu_{1})$ be the optimal transport cost between $\mu_{0}$ and $\mu_{1}$ for the cost $c^{0, 1} (x_0, x_1)$. For $t_1, t_2 \in [0, 1]$, define
			\begin{displaymath}
				\mathcal{A}^{t_1, t_2}(\xi) = \frac{L(\xi)^{2}}{t_2 - t_1}
			\end{displaymath}
			where $L(\xi)$ is the length of $\xi$, so that
			\begin{displaymath}
				c^{t_1, t_2} (x_0, x_1)=\frac{d(x_0, x_1)^2}{t_2 - t_1}
			\end{displaymath}
			and
			\begin{equation}\label{eq.sumcosts}
				C^{t_1, t_2} (\mu_{0}, \mu_{1})  = \frac{W_2 (\mu_{0}, \mu_{1})^2}{t_2 - t_1}.
			\end{equation}
			We want to show that there exists a random minimizer $\xi: [0,1] \to X$, such that, law$(\xi_t) = \mu_t$, for every $t \in [0,1]$. In other words, we want to show that $\zeta$ is a curve in the space of measures which interpolates all possible measures along the minimizing path joining $\mu_{0}$ and $\mu_{1}$. Such a curve is called displacement interpolation.
			
			\
			
			\noindent \textbf{Step 3:} \textit{$\zeta$ is a displacement interpolation.}
			
			\
			
			\noindent In this step, $\xi$ will be constructed by dyadic approximation, according to couplings of measures in $\zeta$ associated with times $t = \frac{1}{2^k}$. In order to achieve this, we will use the iterative construction in lines with \cite[Theorem 7.21]{Vil09}.
			
			\
			
			\noindent Let $\Gamma$ be the set of minimizing curves in $X$. It will be necessary throughout the text, to consider subsets of $\Gamma$ in which the geodesics are defined for certain time intervals and endpoints (or endpoints regions). So, for $s,t \in [0,1]$, $x_s, x_t \in X$, let $\Gamma_{x_s \to x_t}^{s, t}$ be	the set of minimizing curves in $X$ starting at $x_s$ at time $s$, and ending at $x_t$ at time $t$. Similarly, for any two compact sets $K_s, K_t \subset X$, let $\Gamma_{K_s \to K_t}^{s, t}$ be the set of minimizing curves starting in $K_s$ at time $s$, and ending in $K_t$ at time $t$.
			
			\
			
			\noindent Considering the measures along $\zeta$, for $t_1, t_2, t_3 \in [0, 1]$, let $\gamma_{t_1 \to t_2}$ be an optimal transference plan between $\mu_{t_1}$ and $\mu_{t_2}$ for $c^{t_1, t_2}(x_{t_1}, x_{t_2})$, and let $\gamma_{t_2 \to t_3}$ be an optimal transference plan between $\mu_{t_2}$ and $\mu_{t_3}$ for $c^{t_2, t_3}(x_{t_2}, x_{t_3})$. By Lemma \ref{gluing}, it is possible take random variables $(\xi_{t_1}, \xi_{t_2}, \xi_{t_3})$, such that, law$(\xi_{t_1}, \xi_{t_2}) = \gamma_{t_1 \to t_2}$, law$(\xi_{t_2}, \xi_{t_3}) = \gamma_{t_2 \to t_3}$ and law$(\xi_{t_i})=\mu_{t_i}$ for $i=1, 2 ,3$. Since $\zeta$ is minimizing in $\mathscr{P}_2(X)$ (see step 1) and $\mathscr{P}_2(X)$ is a geodesic space, it follows from equation (\ref{eq.sumcosts}), that
			\begin{displaymath}
				C^{t_1, t_2}(\mu_{t_1}, \mu_{t_2}) + C^{t_2, t_3}(\mu_{t_2}, \mu_{t_3}) = C^{t_1, t_3}(\mu_{t_1}, \mu_{t_3}).
			\end{displaymath} 
			This, in particular, implies
			
			\
			
			\noindent (a) $(\xi_{t_1}, \xi_{t_3})$ is an optimal coupling of $(\mu_{t_1}, \mu_{t_3})$ for $c^{t_1, t_3}(\xi_{t_1}, \xi_{t_3})$;
			
			\
			
			\noindent (b) $c^{t_1, t_3} (\xi_{t_1}, \xi_{t_3}) =c^{t_1, t_2} (\xi_{t_1}, \xi_{t_2}) + c^{t_2, t_3} (\xi_{t_2}, \xi_{t_3})$ almost surely.
			
			\
			
			\noindent Let $(\xi_{0}, \xi_{1})$ be an optimal coupling of $(\mu_{0}, \mu_{1})$. Consider optimal transference plans $\gamma_{0 \to \frac{1}{2}}$, $\gamma_{\frac{1}{2} \to 1}$, as above, and construct random variables $\left(\xi_{0}^{(1)}, \xi_{\frac{1}{2}}^{(1)}, \xi_{1}^{(1)}\right)$, such that $\left(\xi_{0}^{(1)}, \xi_{\frac{1}{2}}^{(1)}\right)$ is an optimal coupling of $\left(\mu_{0}, \mu_{\frac{1}{2}}\right)$ for $c^{0, \frac{1}{2}}\left(\xi_{0}^{(1)}, \xi_{\frac{1}{2}}^{(1)}\right)$, $\left(\xi_{\frac{1}{2}}^{(1)}, \xi_{1}^{(1)}\right)$ is an optimal coupling of $\left(\mu_{\frac{1}{2}}, \mu_{1}\right)$ for $c^{\frac{1}{2}, 1}\left(\xi_{\frac{1}{2}}^{(1)}, \xi_{1}^{(1)}\right)$, and law$(\xi_{i}^{(1)}) = \mu_{i}$ for $i=0, \frac{1}{2} ,1$. Moreover, item (a) implies that $(\xi_{0}^{(1)}, \xi_{1}^{(1)})$ is an optimal coupling of $(\mu_{0}, \mu_{1})$, and item (b) implies
			\begin{displaymath}
				c^{0, 1}(\xi_{0}^{(1)}, \xi_{1}^{(1)}) = c^{0, \frac{1}{2}} \left(\xi_{0}^{(1)}, \xi_{\frac{1}{2}}^{(1)} \right) + c^{\frac{1}{2}, 1} \left(\xi_{\frac{1}{2}}^{(1)}, \xi_{1}^{(1)}\right)
			\end{displaymath}
			almost surely. Iterating this process, at the step $k$, we have random variables $\left(\xi_{0}^{(k)}, \xi_{\frac{1}{2^{k}}}^{(k)}, \xi_{\frac{2}{2^{k}}}^{(k)}, \dots, \xi_{1}^{(k)}\right)$, so that for any two $i, j \leq 2^{k}$, $\left(\xi_{\frac{i}{2^{k}}}^{(k)}, \xi_{\frac{j}{2^{k}}}^{(k)}\right)$ is an optimal coupling of $\left(\mu_{\frac{i}{2^{k}}}, \mu_{\frac{j}{2^{k}}}\right)$. Furthermore, for $i_1, i_2, i_3 \leq 2^{k}$,
			\begin{displaymath}
				c^{\frac{i_1}{2^k}, \frac{i_3}{2^k}} \left(\xi_{\frac{i_1}{2^k}}^{(k)}, \xi_{\frac{i_3}{2^k}}^{(k)}\right) = c^{\frac{i_1}{2^k}, \frac{i_2}{2^k}}\left(\xi_{\frac{i_1}{2^k}}^{(k)}, \xi_{\frac{i_2}{2^k}}^{(k)}\right) + c^{\frac{i_2}{2^k}, \frac{i_3}{2^k}}\left(\xi_{\frac{i_2}{2^k}}^{(k)}, \xi_{\frac{i_3}{2^k}}^{(k)}\right)
			\end{displaymath}
			almost always.
			
			\
			
			\noindent We want to extend the random variables $\xi^{(k)}$, defined for times $\frac{i}{2^{k}}$, $i \leq 2^k$, to continuous curves $(\xi^{(k)})_{0 \leq t \leq 1}$. For this, note that for all times $s, t \in [0, 1]$, $s < t$, there exists a Borel map $S_{s \to t}: X \times X \to \mathcal{C}([s, t];X)$, such that for all $x, z \in X$, $S(x, z)$ belongs to $\Gamma_{x \to z}^{s, t}$ \cite[Proposition 7.16]{Vil09}. Indeed, let $E_{s,t}$ be the function given by $E_{s,t}(\xi) := (\xi_{s}, \xi_{t})$, for $\xi: [s, t] \to X$ minimizing curve. Since $X$ is a geodesic space, any two points of $X$ can be joined by at least one minimizing curve, so $E_{s, t}$ is onto $X \times X$. Moreover, $E_{s, t}$ is a continuous map between complete separable metric spaces, and $E_{s,t}^{-1}(x, z)$ is compact for every $x, z$. Therefore, $E_{s,t}$ admits a measurable right-inverse $S_{s \to t}$  \cite{Del75}, that is, $E_{s,t} \circ S_{s \to t} = Id$. Thus $S_{s \to t}$ is a measurable recipe to join two points $x,z$ by a minimizing curve, which was to be proved.
			
			\
			
			\noindent For $t \in \left(\frac{i}{2^k}, \frac{i+1}{2^k}\right)$, define $\xi_{t}^{(k)}$ by $e_t \Big(S_{\frac{i}{2^k} \to \frac{i+1}{2^k}} \left( \xi_{\frac{i}{2^{k}}}, \xi_{\frac{i+1}{2^{k}}} \right) \Big)$. Then, the law of $(\xi_{t}^{(k)})_{0 \leq t \leq 1}$ is a probability on $C(X)$. Let us denote it by $\Theta^{(k)}$. Note that $(e_t)_{*}\Theta^{(k)}=\mu_{t}$ for every $t=\frac{i}{2^k}$, $i \leq 2^k$, and  $\Theta^{(k)}$ is concentrated on $\Gamma$.
			
			\
			
			\noindent In short, from $\zeta$ we iteratively constructed probability measures $\Theta^{(k)}$ on $\Gamma$ up to a step $k$. We want to pass to the limit as $k \longrightarrow \infty$. Given $\varepsilon>0$, since $\mu_{0}$ and $\mu_{1}$ are Radon measures, there exist compact sets $K_0$ and $K_1$ such that $\mu_{0}(X \backslash K_0) \leq \varepsilon$, $\mu_{1} (X \backslash K_1) \leq \varepsilon$. Also, the set $\Gamma_{K_0 \to K_1}^{0, 1}$ is compact \cite[Definition 7.13 and Example 7.15]{Vil09} and
			\begin{align*}
				\Theta^{(k)}\left(\Gamma ~\backslash~ \Gamma_{K_0 \to K_1}^{0, 1}\right) &= \mathbb{P}((\xi_{0}, \xi_{1}) \notin K_0 \times K_1) \\
				& \leq \mathbb{P}(\xi_{0} \notin K_0) + \mathbb{P}(\xi_{1} \notin K_1) \\
				& = \mu_{0}(X \backslash K_0) + \mu_{1} (X \backslash K_1) \\
				& \leq 2 \varepsilon.
			\end{align*}
			Then, we can take a subsequence of $(\Theta^{(k)})_{k}$ that converges weakly to $\Theta$. Since $\Gamma$ is closed in the topology of uniform convergence \cite[Theorem 7.16 (v)]{Vil09}, $\Theta$ is supported in $\Gamma$. Moreover, given a constant $a$, for every $t=\frac{1}{2^{a}} \in [0, 1]$, if $k > a$ we have $(e_t)_{*}\Theta^{(k)} = \mu_{t}$ and, passing to the limit $k \longrightarrow \infty$, $(e_t)_{*}\Theta = \mu_{t}$. Finally, since $\mu_{t}$ depends continuously on $t$, in order to show that $(e_t)_{*}\Theta = \mu_{t}$ for every $t \in [0, 1]$, it suffices to show that $(e_t)_{*}\Theta$ is continuous as a function of $t$. In other words, we need to show that, given $u$ bounded continuous function on $X$, $U(t) = \mathbb{E} u (\xi_{t})$ is a continuous function of $t$ if $\xi$ is random geodesic with law $\Theta$. In fact, since  $t \mapsto \xi_{t}$ is continuous and the composition of continuous functions is also continuous,  $t \mapsto u(\xi_{t})$ is continuous. Moreover, let $\{ u_n \}$ be Lebesgue integrable functions, such that, $u_n \longrightarrow u$. Since $u$ is bounded, $|u_n| \leq g$ for some integrable function $g$; by Lebesgue's Dominated Convergence Theorem $\mathbb{E}u_n \longrightarrow \mathbb{E}u$. From these results the continuity of $U(t)$ follows, as we wanted.
			
			\
			
			\noindent Accordingly, we constructed $\xi$, such that, for each $t \in [0, 1]$, $\mu_{t}$ is the law of $\xi_{t}$, where $(\xi_{t})_{0 \leq t \leq 1}$ is a dynamical optimal coupling of $(\mu_{0}, \mu_{1})$. In other terms, we say that $\zeta$ is displacement interpolation.
			
			\
			
			\noindent \textbf{Step 4:} \textit{An important observation about displacement interpolation.}
			
			\
			
			\noindent By \cite[Theorem 8.5]{Vil09}, if $\{ \mu_t \}$ is a displacement interpolation between two compactly supported probability measures on $X$, and $t_0 \in (0, 1)$ is given, then, for every $t \in [0, 1]$, the transport map $T_{t_0 \to t}$ between the points $\xi(t_0)$ and $\xi(t)$ it is well-defined $\mu_{t_0}$-almost everywhere and it is Lipschitz continuous. In other words, $T_{t_0 \to t}$ is a solution of the Monge problem between $\mu_{t_0}$ and $\mu_t$. For the completeness of the text, we will comment briefly on the proof.
			
			\
			
			\noindent Note that $(e_0, e_1, e_0, e_1)_{*}(\Theta \otimes \Theta) = \gamma_{0 \to 1} \otimes \gamma_{0 \to 1}$. So, if one property holds true $\gamma_{0 \to 1} \otimes \gamma_{0 \to 1}$-almost always for quadruples, this property, for the endpoints of pairs of curves, holds true $\Theta \otimes \Theta$-almost always. Since $\gamma_{0 \to 1}$ is optimal, it has a property named c-cyclical monotonicity \cite[Theorem 5.10]{Vil09}, so that $c(x, y) + c(\tilde{x}, \tilde{y}) \leq c(x, \tilde{y}) + c(\tilde{x}, y)$, $\Theta \otimes \Theta(dx, dy, d\tilde{x}, d\tilde{y})$-a. a. Thus, $c(\xi(0), \xi(1)) + c(\tilde{\xi}(0), \tilde{\xi}(1)) \leq c(\xi(0), \tilde{\xi}(1)) + c(\tilde{\xi}(0), \xi(1))$, $\Theta \otimes \Theta(d\xi, d\tilde{\xi})$-a. a.
			
			\noindent Moreover, by Mather's Shortening Lemma \cite[Theorem 8.1 and Corollary 8.2]{Vil09} 
			\begin{equation} \label{8.6}
				\sup_{0 \leq t \leq 1} d(\xi_t, \tilde{\xi_t}) \leq C_{K} d(\xi_{t_0}, \tilde{\xi_{t_0}}).
			\end{equation}
			where $C_K$ is a constant. Suppose that $\Theta$ is supported on a compact set $S$. The equation \eqref{8.6} defines a closed set for all pairs of curves	$\xi, \tilde{\xi} \in S \otimes S$. 
			Let $e_{t_0}(S)$ be the union of all $\xi(t_0)$, when $\xi$ varies over $S$, and $T_{t_0 \to t}$ by $T_{t_0 \to t}(\xi(t_0)) = \xi(t)$. Note that if $\xi$, $\tilde{\xi}$ in $S$ are such that $\xi(t_0) = \tilde{\xi}(t_0)$, then \eqref{8.6} implies $\xi = \tilde{\xi}$. Moreover, $T_{t_0 \to t}$ is Lipschitz-continuous. So, $(\xi(t_0), T_{t_0 \to t}(\xi(t_0)))$ is a Monge coupling of $(\mu_{t_0}, \mu_{t})$.
			
			\
			
			\noindent \textbf{Step 5:} \textit{Absolute continuity of measures $\mu_t$ on $\zeta$ with respect to the volume measure on $X$, when $\mu_0$ and $\mu_1$ are compactly supported.}
			
			\
			
			\noindent Without loss of generality, let us suppose that $\mu_{1}$ is absolutely continuous with respect to the volume measure on $X$, vol. If $\mu_0$ and $\mu_1$ are compactly supported, then $\zeta$ has a compact support. Indeed, let $A_0, A_1 \subset X$ be the compact supports of $\mu_0, \mu_1$ and $\gamma_{0 \to 1}$ be the transference plan with marginals $\mu_0$ and $\mu_1$. Consider the canonical projections $(\text{proj}_1)$, $(\text{proj}_2)$ on the first and second components, respectively. Since $(\text{proj}_1)_{*}\gamma_{0 \to 1} =\mu_0$  and  $(\text{proj}_2)_{*}\gamma_{0 \to 1} =\mu_1$,
			\begin{displaymath}
				(\text{proj}_1)_{*}\gamma_{0 \to 1}(X \times X) = \mu_{0}(X) = \mu_{0}(A_0) = \gamma_{0 \to 1}(A_0 \times X)
			\end{displaymath}
			\begin{displaymath}
				(\text{proj}_2)_{*}\gamma_{0 \to 1}(X \times X) = \mu_{1}(X) = \mu_{1}(A_1) = \gamma_{0 \to 1}(X \times A_1).
			\end{displaymath} 
			Therefore $\gamma_{0 \to 1}$ is concentrated in a compact set $A_0 \times A_1$. Moreover, since $\gamma_{0 \to 1} = (e_0, e_1)_{*}\Theta$ and the evaluation map is continuous, $\Theta$ is concentrated in a compact set. The compactness of the $\zeta$ support follows from $(e_t)_{*} \Theta = \mu_{t}$.
			
			\
			
			\noindent We can use Step 4, and there is a Lipschitz map $T$ solving the Monge problem between $\mu_{t}$ and $\mu_{1}$, $t \in (0, 1)$. Let $N$ be a set such that the volume measure is zero and consider $T(N)$. If $T(N)$ is not Borel measurable, consider a negligible Borel set that contains $T(N)$ (which by abuse of notation, we will continue denoting $T(N)$). Note that $N \subset T^{-1}(T(N))$, so
			\begin{displaymath}
				\mu_{t}(N) \leq \mu_{t}(T^{-1}(T(N))) = (T_{*} \mu_{t}) (T(N)) = \mu_{1} (T(N))
			\end{displaymath}
			and then $\mu_{t} (N)=0$, since $\text{vol}(T(N)) \leq \| T \|_{\text{Lip}} \text{vol}(N) = 0$ (this inequality occurs since $T$ is Lipschitz and $\| T \|_{\text{Lip}}$ stands for the Lipschitz constant) and $\mu_1 \ll \text{vol}$ by hypothesis. So, $\mu_{t}(N)=0$ for every Borel set $N$ such that vol$(N)=0$, that is, $\mu_{t} \ll \text{vol}$.
			
			\
			
			\noindent \textbf{Step 6:} \textit{Absolute continuity of measures $\mu_t$ of $\zeta$ with respect to the volume measure on $X$: the general case.}
			
			\
			
			\noindent Without loss of generality, suppose $\mu_{1} \ll \text{vol}$. Let us assume that there is some case in which neither $\mu_0$ nor $\mu_1$ is compactly supported. We will prove our statement \textit{(ii)} by contradiction. Suppose that $\mu_{\tau}$, for $\tau \in (0,1)$, is not absolutely continuous with respect to the volume measure on $X$. Then, there exists a set $Z_{\tau} \subset X$, such that vol$(Z_{\tau})=0$ and $\mu_{\tau}(Z_{\tau}) > 0$. Consider $\mathcal{Z}:=\{ \xi \in \Gamma : \xi_{\tau} \in Z_{\tau} \}$ so that $\Theta (\mathcal{Z}) = \mathbb{P}(\xi_{s\tau} \in Z_{\tau}) = \mu_{\tau}(Z_{\tau}) > 0$. Since $\Theta$ is a regular measure, there exists $\mathcal{K} \subset \mathcal{Z}$ compact such that $\Theta(\mathcal{K}) > 0$. So, if we set
			\begin{displaymath}
				\Theta' := \frac{\Theta ~\mathbbm{1}_{\mathcal{K}}}{\Theta(\mathcal{K})}
			\end{displaymath}
			and consider $\gamma_{0 \to 1}':= (e_0, e_1)_{*}\Theta'$ and $\mu_{t}'=(e_t)_{*}\Theta'$, we have
			\begin{displaymath}
				\mu_t' \leq  \frac{(e_t)_{*}\Theta}{\Theta (\mathcal{K})} =\frac{\mu_{t}}{\Theta (\mathcal{K})}.
			\end{displaymath}
			In this way, $(\mu_{t}')$ is a displacement interpolation and, considering the previous equation for $t=1$, $\mu_{1}' \ll \mu_1 \ll \text{vol}$. Note that, now $\mu_{\tau}'$ is concentrated on $e_{\tau}(\mathcal{K}) \subset e_{\tau}(\mathcal{Z}) \subset Z_{\tau}$ and then $\mu_{\tau}'$ is singular. Although $\mu_0'$ is supported in $e_0(\mathcal{K})$ and $\mu_{1}'$ is supported in $e_1(\mathcal{K})$, which are compact. This is the case of Step 5. Then, $\mu_{\tau}' \ll \text{vol}$, which is a contradiction.
			
			\
			
			\
			
			\noindent \textit{(iii)} In this item, we denote $Y:=\Omega$, where $\Omega$ is the subset of the quotient space $X^*$ on which the metric measure foliation is defined (see Definition \ref{def_mmf}), and $\pi := p$, where $p$ is the quotient map. The weak continuity of $f$ in this case was proved in Proposition \ref{mmf}. Let $\psi$ be a minimizing curve on $Y$ connecting $y$ and $y'$. Consider the path $\zeta = \{ \mu_{y_{t}} : t \in [0, 1] \}$ as constructed in item \textit{(i)}. Since $\psi$ was taken as a minimizing curve and $f$ is an isometry, $\zeta$ is minimizing.
			
			\
			
			\noindent Observe that every optimal transference plan between $\mu_y$ and $\mu_{y'}$ is supported on $\{ (x, x') \in \pi^{-1}(y) \times \pi^{-1}(y')  ~:~ d^*(y, y') = d(x, x') \}$. In fact, let $\gamma_{y \to y'}$ be an optimal transference plan for $\mu_y$, $\mu_{y'}$. Since supp$(\mu_y) \subset \pi^{-1}(y)$, supp$(\mu_{y'}) \subset \pi^{-1}(y')$, and $\pi$ is 1-Lipschitz, we have $\gamma_{y \to y'}$ supported on $\{ (x, x') \in \pi^{-1}(y) \times \pi^{-1}(y')  ~:~ d^*(y, y') \leq d(x, x') \}$.
			Consider the set $\Upsilon := \{ (x, x') \in \pi^{-1}(y) \times \pi^{-1}(y')  ~:~ d^*(y, y') < d(x, x') \}$. If $\gamma_{y \to y'} (\Upsilon) >0$, then:
			\begin{align*}
				\gamma_{y \to y'} (\Upsilon) ~d^{*} (y, y')^2 \leq & ~\gamma_{y \to y'} (\Upsilon) ~d(x, x')^2 \\
				< & \int_{\Upsilon} d(x, x')^2 ~d\gamma_{y \to y'}.
			\end{align*}
			So,
			\begin{align*}
				d^{*} (y, y')^2 < & \int_{\Upsilon} d(x, x')^2 ~d\gamma_{y \to y'} + \gamma_{y \to y'} (X \times X \backslash \Upsilon)  ~d^{*} (y, y')^2 \\
				= & \int_{\Upsilon} d(x, x')^2 ~d\gamma_{y \to y'} + \int_{X \times X \backslash \Upsilon} d^*(y, y')^2 ~d\gamma_{y \to y'} \\
				\leq & \int_{\Upsilon} d(x, x')^2 ~d\gamma_{y \to y'} + \int_{X \times X \backslash \Upsilon} d(x, x')^2 ~d\gamma_{y \to y'} \\
				=& \int_{X \times X} d(x, x')^2 ~d\gamma_{y \to y'}.
			\end{align*}
			
			\
			
			\noindent Then, $d^{*} (y, y')^2 < W_2 (\mu_{y}, \mu_{y'})^2$, which is a contradiction.
			
			\
			
			\noindent Therefore, we have the transport between $\mu_y$ and $\mu_{y'}$ orthogonal to the leaves. Furthermore, since $\zeta$ is minimizing, the existence of the optimal transport plan is guaranteed. 
			
			\
			
			\noindent Suppose, without loss of generality that $\mu_y$ is absolutely continuous with respect to the volume measure of the leaf $y$. Then, the support of $\mu_y$ contains more than one point. Considering the transport problem described above, for each $x$ in the support of $\mu_y$ such that $(x, x')$ is in the support of $\gamma_{y \to y'}$ for some $x'$, each one of the intermediate leaves $y_t$ must contain a corresponding point $x_{y_t}$. Thus, each leaf will have the distribution $\mu_{y(t)}$ absolutely continuous with respect to the volume measure of the respective leaf.
		\end{proof}
		
		\
		
		\begin{remark}
			Since $\zeta$ is a displacement interpolation, it is a constant speed geodesic, by \cite[Theorem 2.10]{AG13}. That is, the path in $(\mathscr{P}(X), W_{2})$ given by the disintegration map $f$ evaluated at the minimal curve $\psi$ in $Y$ is a constant speed geodesic in $\mathscr{P}_2(X)$.
		\end{remark}
		
		\
		
		\begin{remark}
			One could, for instance, take the disintegration map in the domain of weak continuity given by Proposition \ref{weakly_continuous}. 
		\end{remark}
		
		\
		
		\begin{remark}
			Theorem \ref{thm.absolute} (iii) holds, for instance, in the case of the disintegration of the volume measure in the Solid Torus (Example \ref{example1}), or in the context of Examples \ref{example_pro}, \ref{ex_prod}, and \ref{ex_group}.
		\end{remark}
		
		\

	\section*{Acknowledgements}
		\noindent This work was financed in part by the Coordena\c{c}\~ao de Aperfei\c{c}oamento de Pessoal de N\'ivel Superior - Brasil (CAPES) - Finance Code 001. R.P. has been partially supported by S\~ao Paulo Research Foundation (FAPESP) - grant \#2018/05309-3 and grant \#2019/14724-7. C. S. R. has been partially supported by S\~{a}o Paulo Research Foundation (FAPESP): grant \#2016/00332-1, grant \#2018/13481-0, and grant \#2020/04426-6. The opinions, hypotheses and conclusions or recommendations expressed in this work are the responsibility of the authors and do not necessarily reflect the views of FAPESP.

		\noindent C. S. R. would like to acknowledge support from the Max Planck Society, Germany, through the award of a Max Planck Partner Group for Geometry and Probability in Dynamical Systems. 

		\noindent The authors are thankful to Pedro Catuogno, Rostislav Matveev, Florentin M\"unch, and Ali Tahzibi for helpful discussions, and to the anonymous referee for valuable suggestions to improve the manuscript.
		
		\
	
	\bibliographystyle{amsalpha}
	
\end{document}